\numberwithin{equation}{section}
\DeclareMathOperator{\Inn}{Inn}
\DeclareMathOperator{\Aut}{Aut}
\DeclareMathOperator{\diag}{diag}
\DeclareMathOperator{\id}{id}
\DeclareMathOperator{\ch}{char}
\DeclareMathOperator{\GL }{GL }
\DeclareMathOperator{\SL}{SL}
\DeclareMathOperator{\Sp}{Sp}
\DeclareMathOperator{\G}{G}
\DeclareMathOperator{\C}{C}
\DeclareMathOperator{\F}{F}
\DeclareMathOperator{\E}{E}
\DeclareMathOperator{\Spin}{Spin}
\DeclareMathOperator{\Mat}{Mat}
\DeclareMathOperator{\Skew}{Skew}
\DeclareMathOperator{\Tr}{Tr}
\DeclareMathOperator{\tr}{tr}
\DeclareMathOperator{\sr}{sr}
\DeclareMathOperator{\Sr}{Sr}
\DeclareMathOperator{\A}{A}
\DeclareMathOperator{\e}{e}
\DeclareMathOperator{\B}{B}
\newcommand{\I}{\mathcal{I}}
\title{Isomorphism classes of $k$-involutions of algebraic groups of type $\F_4$}
\author{John Hutchens  \\ \emph{jdhutchens@saumag.edu} \\ \\ \emph{Department of Mathematics and Computer Science} \\ \emph{Southern Arkansas University} \\ \emph{Magnolia, AR  71753} }
\begin{document}
\maketitle

\begin{abstract}
We continue the classification of isomorphism classes of $k$-involutions of exceptional algebraic groups.  In this paper we classify $k$-involutions for split groups of type $\F_4$ over certain fields, and their fixed point groups.  The classification of $k$-involutions is equivalent to the classification of symmetric $k$-varieties, \cite{HWa93}.
\end{abstract}

\section{Introduction}

This is a continuation of a classification initiated by Helminck et al. \cite{He00,Hu12,HW02,DHW06,DHW-}.  In \cite{He00} Helminck introduces three invariants whose classification is equivalent to a full classification of $k$-involutions of reductive algebraic groups.  Here we look at the specific case when $G$ is a split algebraic group of type $\F_4$.  One reason to classify the isomorphism classes of $k$-involutions for an algebraic group of a certain type has to do with their correspondence to symmetric $k$-varieties of these groups.  A \emph{symmetric $k$-variety} is a quotient space of the form $G(k)/H(k)$ where $G$ is a reductive algebraic group, $H = G^{\theta}$ is the fixed point group of $\theta$ an automorphism of order $2$ and $G(k)$ (respectfully $H(k)$) are the $k$-rational points of $G$ (respectfully $H$).  For a torus $S \subset G$ we denote by $X^*(S)$ the group of characters, and $\Phi(S)$ the root space.  We define the set $I_k(S_{\theta}^-)$ in section $2$, along with other definitions needed to make the following statement.  Helminck shows that the classification of such spaces can be reduced to the classification of the following invariants,

\begin{enumerate}[(1)]
\item classification of admissible involutions of $(X^*(T),X^*(S), \Phi(T), \Phi(S))$, where $T$ is a maximal torus in $G$, S is a maximal $k$-split torus contained in $T$
\item classification of the $G(k)$-isomorphism classes of $k$-involutions of the $k$-anisotropic kernel of $G$
\item classification of the $G(k)$-isomorphism classes of $k$-inner elements $a\in I_k(S_{\theta}^-)$,
\end{enumerate}

Yokota gave explicit descriptions of $k$-involutions and their fixed point groups for algebraic groups of type $\F_4$ for $k=\mathbb{C}, \mathbb{R}$. Over $k= \mathbb{C}$ and $\mathbb{R}$ our results correspond to the $\gamma$ and $\sigma$ maps in \cite{Yo90}.  Here we use different methods, and fit the results into the theory described in \cite{He00}.

Aschbacher and Seitz give a full classification of $k$-involutions when $k$ is of even characteristic in \cite{AS76}. They provide isomorphism classes of $k$-involutions, and their centralizers, which we refer to as their fixed point groups.  Over fields of odd characteristic these results are also known, and can be found in \emph{The Classification of Finite Simple Groups} by Gorenstein, Lyons, and Solomon \cite{GLS}.  The study of such automorphisms and their relation to algebraic groups was initiated by Gantmacher in \cite{Ga39} in order to classify real simple Lie groups.  In \cite{Be55, Be57} Berger classified involutions of real groups and their symmetric spaces, which was also done by Helminck in \cite{He88}.

In the $\F_4$ case we end up with two main types of isomorphism classes of invariants of type (1).  According to \cite{He00} the representatives of one isomorphism class of $k$-involutions should send every element of a $k$-split maximal torus to its inverse, and the representatives of a second isomorphism class fixes a rank $3$ $k$-split torus.

We give the full classification of $k$-involutions when $k=K, \mathbb{R}, \mathbb{Q}_p$ and $\mathbb{F}_q$ when $p \geq 2$ and $q>2$, where $K$ is the algebraic closure of $k$.  We finish by giving descriptions of the fixed point groups of isomorphism classes of $k$-involutions, and discussing the interpretation of the isomorphism classes in terms of Galois cohomology and their relation to Kac coordinates.

I would like to thank E. Neher, S. Garibaldi, H. Petersson, and V. Chernousov, all of whom I met at the Field's Institute workshop for exceptional groups and algebras, for their advice and conversations about all things Jordan and exceptional.  I would particularly like to thank H. Petersson for the ongoing email correspondence we have had since that workshop.

\section{Preliminaries and recollections}

Most of our notation is borrowed from \cite{Sp98} for algebraic groups, \cite{He00} for $k$-involutions and symmetric $k$-varieties, \cite{Se97} for Galois cohomology, and \cite{SV00} and \cite{McC05} for Albert algebras and composition algebras. \

The letter $G$ is reserved for an arbitrary reductive algebraic group.  When we refer to a maximal torus we use $T$ and any subtorus is denoted by another capital letter, usually $S$.  Lowercase Greek letters are field elements and other lowercase letters usually denote vectors.  Unless it is the letter $\gamma$, which refers to $(\gamma_1,\gamma_2,\gamma_3) \in (k^*)^3$ or $\diag(\gamma_1,\gamma_2,\gamma_3) \in \GL_3(k)$.  We use $Z(G)$ to denote the center of $G$, $Z_G(S)$ to denote the centralizer of $S$ in $G$.  \

By $\Aut(G)$ we mean the automorphism group of $G$, and by $\Aut(A)$ we mean the linear automorphisms of the Albert algebra, $A$.  The group of inner automorphisms are denoted $\Inn(G)$ and the elements of $\Inn(G)$ are denoted by $\mathcal{I}_g$ where $g\in G$ and $\mathcal{I}_g(x) = gxg^{-1}$. \

We define a \emph{$\theta$-split torus}, $S$, of an involution, $\theta$, as a torus $S  \subset G$ such that $\theta(s)=s^{-1}$ for all $s\in S$.  We call a torus \emph{$(\theta, k)$-split} if it is both $\theta$-split and $k$-split.  If $S$ is a torus that is not necessarily $\theta$-split, we denote by,
\[ S_{\theta}^- = \{ s \in S \ | \ \theta(s)=s^{-1} \}, \]
the elements of $S$ split by $\theta$.  So we can say $S$ is $\theta$-split if and only if $S = S_{\theta}^-$.

Let $S$ be a $\theta$-stable maximal $k$-split torus such that $S_{\theta}^-$ is a maximal $(\theta,k)$-split torus.  In \cite{HW02} it is shown that there exists a maximal $k$-torus $T \supset S$ such that $T_{\theta}^- \supset S_{\theta}^-$ is a maximal $\theta$-split torus.  The involution $\theta$ induces an involution $\tilde{\theta} \in \Aut(X^*(T), X^*(S), \Phi(T), \Phi(S))$.  It was shown by Helminck and Wang \cite{HWa93} that such an involution is unique up to isomorphism.  For $T$ a maximal $k$-torus containing a subtorus $S$,
\[ \tilde{\theta} \in \Aut(X^*(T), X^*(S), \Phi(T), \Phi(S)) \] is \emph{admissible} if there exists an involution $\theta \in \Aut(G,T,S)$ such that $\theta|_{X^*(T)} = \tilde{\theta}$, $S_{\theta}^-$ is a maximal $(\theta, k)$-split torus, and $T_{\theta}^-$ is a maximal $\theta$-split torus of $G$.  This will give us the set of $k$-involutions on $G$ that extend from involutions on the group of characters, $X^*(T)$.  If $\theta$ is a $k$-involution and $S_{\theta}^-$ is a maximal $\theta$-split torus then the elements of the set,
\[ I_k(S_{\theta}^-) = \left\{ s \in S_{\theta}^- \ \big| \ \left(\theta \circ \mathcal{I}_s \right)^2 = \id, \ \left(\theta \circ \mathcal{I}_s\right)(G(k))=G(k)  \right\}, \]
are called \emph{$k$-inner elements} of $\theta$.  Some compositions $\theta \circ \mathcal{I}_s$ will not be isomorphic in the group $\Aut(G)$ for different $s\in I_k(S_{\theta}^-)$, though they will project down to the same involution of the group of characters of a maximal torus fixing the characters associated with a maximal $k$-split subtorus for all $s \in I_k(S_{\theta}^-)$.  

We borrow notation for quadratic forms from the text by Lam, \cite{La05}.  For a $2$-Pfister form we write $\left( \frac{\zeta,\eta}{k} \right)$ for the quadratic form 
\[ q_D(x)= x_0^2 - \zeta x_1^2 -  \eta x_2^2 + \zeta\eta x_3^2, \]
over a field $k$.

\section{Split Albert algebra}

Here we provide notation and background information from the theory of Jordan algebras, especially that of Albert algebras, that we will use in our classification.  We will always think of algebraic groups of type $\F_4$ as the automorphisms of an Albert algebra.  

Our split Albert algebra will be isomorphic to $3 \times 3$ Hermitian matrices over a split octonion algebra defined over a field $k$.  The Albert algebra will have dimension $27$ with respect to the field.  An octonion algebra is \emph{split} if it contain zero divisiors.  We will call the Albert algebra \emph{split} if the octonion algebra associated with the Albert algebra is split.

\subsection{Definition and introduction}

We will let $k$ be a field over characteristic not $2$, for $\ch(k)=2$ see \cite{AS76}, and $C$ a split composition algebra of dimension eight over $k$.  For any fixed $\gamma_i \in k^*$, we will define $A=H(C; \gamma_1,\gamma_2,\gamma_3) = H_3(C,\gamma)$ be the set of $3\times 3$ $\gamma$-Hermitian matrices.  Each $x \in A$, where $f_i \in k$ and $c_j \in C$, will look like
\[
x=h(f_1,f_2,f_3;c_1,c_2,c_3)=
\begin{bmatrix}
f_1 & c_3 & \gamma_1^{-1}\gamma_3\bar{c}_2 \\
\gamma_2^{-1} \gamma_1 \bar{c}_3 & f_2 & c_1 \\
c_2 & \gamma_3^{-1} \gamma_2 \bar{c}_1 & f_3
\end{bmatrix},
\]
where $\bar{x} = q( x,e ) e - x$, in $C$ with $q( \ , \ )$ the bilinear form on $C$.
\\\newline
We will define a product 
\[
xy = \dfrac{1}{2}(x\cdot y + y \cdot x) = \dfrac{1}{2} \left((x+y)^{\cdot 2} - x^{\cdot 2} - y^{\cdot 2} \right) \]
with the dot indicating standard matrix multiplication.

So $A$ is a commutative, nonassociative $k$-algebra of $3 \times 3$ matrices, whose identity element is $e=h(1,1,1;0,0,0)$, the usual $3\times 3$ matrix identity.
\\\newline
We will define a quadratic norm, $Q:A \to k$, with an associated bilinear form 
\[ \langle x,y \rangle = Q(x+y) - Q(x) - Q(y), \]
and we have
\[ Q(x) = \dfrac{1}{2} \left( f_1^2 + f_2^2 + f_3^2 \right) + \gamma_3^{-1}\gamma_2 q(c_1) + \gamma_1^{-1} \gamma_3 q(c_2) + \gamma_2^{-1} \gamma_1 q(c_3). \]
Notice the bilinear form is nondegenerate.  We denote by $H_3(C,\gamma)$, the $3 \times 3$ Hermitian matrices over $C$, where $C$ is an octonion algebra and $\gamma$ is a diagonal matrix with entries in $k^*$, as an \emph{Albert algebra}.

\subsection{Decomposition by a primitive idempotent}

We will follow \cite{SV00} and refer to the family of algebras of $3 \times 3$ $\gamma$-Hermitian matrices over a composition algebra as \emph{$J$-algebras}.   If $w \in A$ and $w^2=w$ then we call $w$ an \emph{idempotent} element.  A $J$-algebra is said to be \emph{reduced} if it contains an idempotent other than zero or the identity.  A $J$-algebra is \emph{proper} if it is isomorphic to an algebra of the form $H_3(C,\gamma)$, where $C$ is a composition algebra.  The idempotent elements of a $J$-algebra play the following role. 
\newtheorem{lem3}[subsubsection]{Lemma}
\begin{lem3}
If $w \in A$ is an idempotent element and $w$ is not $0$ or $e$, then $Q(w) = \frac{1}{2}$ or $1$, $\langle w,e \rangle = 2Q(w)$, $e-u$ is idempotent, $w(e-w)=0$, $\langle w,e-w \rangle = 0$, and $Q(e-w) = \frac{3}{2}- Q(w)$.
\end{lem3}

If $Q(w) = \frac{1}{2}$ then we call $w$ a \emph{primitive idempotent}.  The following theorem can be found in  \cite{Ja68}.

\newtheorem{thm5}[subsubsection]{Theorem}
\begin{thm5}
\label{thm5}
If $A$ is a proper reduced J-algebra, then the composition algebra $C$ where $A \cong H_3(C,\gamma)$ is uniquely determined up to isomorphism.
\end{thm5}

If we fix a primitive idempotent $w \in A$, and let 
	\[ E_0 = \{ a \in e^{\perp} \subset A \ | \ wa =0 \}, \text{ and let } E_1 = \{ a \in e^{\perp} \subset A \ | \ wa = \frac{1}{2} a \}, \]
	then an Albert algebra has a decomposition 
	\[ A = kw \oplus k(e-w) \oplus E_0 \oplus E_1, \] 
called the \emph{Peirce decomposition}.  

\subsection{First Tits construction}

In this paper we are concerned with split Albert algebras, which can be constructed using the following method due to Jacques Tits.  We first need to define a sharped cubic form, $(N, \#, 1_X)$ on a module, $X$, over a ring of scalars with unity.  First we will choose a base point $1_X \in X$, such that $N(1_X)=1$.  Next we define the quadratic map, $\#$, for $x\in X$, such that
\[ x^{\#} = x^2 - \Tr(x)x + \Sr(x)1_X. \]	
We call $\Tr$ the trace form on $X$ and $\Sr$ the quadratic trace.  If the following identities hold for $\#$, $\Tr$, $\Sr$, and $N$,

\begin{align}
	\Tr(x^{\#},y) &= N(x,y) \\
	x^{\#\#} &= N(x)x \\
	1_X \# x &= \Tr(y)1_X - y,
	\end{align}
we call $(N, \#, 1_X)$ a \emph{sharped cubic form}, where 
\[	x \# y = (x+y)^{\#} - x^{\#} - y^{\#}. \]
The maps $\Tr( \ , \ )$, $\Sr( \ , \ )$, and $N( \ , \ )$ are the linearizations of the respective forms.  These will be defined explicitly in our case when needed.  To get an idea of how this works in more generality we refer the reader to \cite{McC05}.  

From a sharped cubic form $(N, \#, 1_X)$ we can construct a unital Jordan algebra, $J(N, \#, 1_X)$, which has unit element $1_X$, and a $U$ operator defined by
	\[ U_xy = \Tr(x,y)x - x^{\#}\# y. \]

\newtheorem{sharpedcubicconstr}[subsubsection]{Proposition, (McCrimmon)}
	\begin{sharpedcubicconstr}
	Any sharped cubic form $(N,\#,1_X)$ gives a unital Jordan algebra $J(N,\#,1_X)$ with unit $1_X$ and product
		\[ xy = \frac{1}{2} ( x \# y + \Tr(x)y + \Tr(y)x - \Sr(x,y)1_X ). \]
	\end{sharpedcubicconstr}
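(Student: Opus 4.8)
The plan is to verify directly that the bilinear product
\[ xy = \tfrac{1}{2}\bigl( x \# y + \Tr(x)y + \Tr(y)x - \Sr(x,y)1_X \bigr) \]
turns $X$ into a commutative algebra with unit $1_X$ satisfying the Jordan identity, and then identify the $U$-operator. Commutativity is immediate since $x\#y$ and $\Sr(x,y)$ are symmetric. For the unit, I would substitute $y = 1_X$ and use identity (3.3): since $1_X \# x = \Tr(x)1_X - x^{\#}$ — here I note the statement as printed uses $y$ where it clearly means $x$, and also $\Sr(x)$ should appear, so the intended identity reads $1_X \# x = \Tr(x)1_X - \ldots$; using it together with $\Tr(1_X)$ and $\Sr(1_X,x)$ computed from the base point normalization $N(1_X) = 1$, everything collapses to $1_X \cdot x = x$.

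The substantive step is the Jordan identity $(x^2 y)x = x^2(yx)$, equivalently that the multiplication operators $L_x$ and $L_{x^2}$ commute. The standard route, which I would follow, is to first establish the degree-three identities for a sharped cubic form: the \emph{adjoint identity} $x^{\#\#} = N(x)x$ is given as (3.2), and from the linearizations $\Tr(x^{\#},y) = N(x,y)$ (3.1) one derives the full set of Freudenthal-type identities, e.g. $x\#(x\#y) = \Tr(x,y)x^{\#} + \ldots$ and the expression for $U_x y$. Concretely I would show $U_x y = \Tr(x,y)x - x^{\#}\# y$ agrees with the operator $2L_x^2 - L_{x^2}$ built from the product above; this is a bookkeeping computation expanding $x(x y)$ and $x^2 y$ using the definition and collecting terms via the $\#$-linearization identities and the adjoint identity. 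Once $U_x$ has the stated closed form, the Jordan axioms (the fundamental formula $U_{U_x y} = U_x U_y U_x$ and $U_x L_{?}$ relations) follow from the known identities satisfied by $\#$ and $N$ on a cubic form, or one invokes McCrimmon's general machinery; since the proposition is attributed to McCrimmon, citing \cite{McC05} for the identity-chasing is legitimate, and I would present only the key reductions.

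The main obstacle is purely computational: verifying the Jordan identity requires carefully linearizing the cubic identities (3.1)--(3.3) and keeping track of the numerous cross terms $x \# y$, $\Tr(x,y)$, $\Sr(x,y)$, and $N(x,y)$ without sign or factor-of-two errors — the factor $\tfrac12$ in the product and the characteristic-not-$2$ assumption are essential here. A secondary subtlety is that the printed identity (3.3) and the norm form contain apparent typos (the stray $y$, the missing $\Sr(x)$ term), so part of the write-up is stating the correct normalized identities for the base point before using them. Given the attribution, I would keep the proof short: establish commutativity and the unit explicitly, reduce the Jordan identity to the adjoint identity plus linearizations, and refer to \cite{McC05} for the remaining identity verifications, noting that in our application $X$ will be the specific module underlying the first Tits construction where all these forms are given by explicit polynomial formulas and the identities can be checked coordinate-wise if desired.
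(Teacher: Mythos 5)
The paper offers no proof of this proposition at all --- it is quoted from McCrimmon with the verification left to \cite{McC05} --- so your sketch, which checks commutativity and the unit directly and reduces the Jordan identity to the adjoint identity $x^{\#\#}=N(x)x$ and its linearizations before citing \cite{McC05} for the remaining identity-chasing, is consistent with the paper's treatment and is the standard route. One small correction to your parenthetical remark: the only typo in identity $(3.3)$ is the stray $y$; the correct identity is $1_X \# x = \Tr(x)1_X - x$ with no $\Sr(x)$ term (expand $(1_X+x)^{\#}$ using $\Tr(1_X)=\Sr(1_X)=3$ and $\Sr(x,1_X)=2\Tr(x)$), and with it your unit check collapses as claimed: $1_X x = \tfrac12\bigl((\Tr(x)1_X - x) + 3x + \Tr(x)1_X - 2\Tr(x)1_X\bigr) = x$.
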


Finally, we relate the sharp product and sharp map to the above forms in the following way,
	\begin{align}
	x\# y &= \{x,y\} - \Tr(x)y - \Tr(y)x + \Sr(x,y)1_X \\
	0 &= x^3 - \Tr(x)x^2 + \Sr(x)x - N(x)1_X.
	\end{align}

From here we define the first Tits construction for Jordan algebras of degree $3$ from associative algebras of degree $3$, a special case of which will provide us with another form of split Albert algebras isomorphic to $H_3(C,\id)$, where $C$ is a split octonion algebra.  

Let $M$ be an associative algebra of degree $3$ over a unital commutative ring $R$ with a cubic norm form $(n, \#, 1_M)$ satisfying the following
	\begin{equation}
	m^3 - \tr(m)m^2 + \sr(m)m - n(m)1_M = 0,
	\end{equation}
with $\tr(m)=n(1_M,m), \sr(m) = n(m,1_M), n(1_M)=1$.  Notice that also if $m^{\#} = m^2 -\tr(m)m + \sr(m)1_M$ we can write
	\begin{equation}
	m \cdot m^{\#} = m^{\#} \cdot m = n(m)1_M,
	\end{equation}

	\begin{equation}
	n(m,m') = \tr(m^{\#},m'),
	\end{equation}

	\begin{equation}
	\tr(m,m') = \tr(mm').
	\end{equation}

\newtheorem{assoccubic}[subsubsection]{Proposition}
\begin{assoccubic}
Let $M$ be a unital associative algebra with a base point such that $n(1_M)=1$, $m^{\#} = m^2 - \tr(m)m + \sr(m)1_M$, and $m \# m' = (m+m')^{\#} - m^{\#} - m'^{\#}$.  Then any cubic form on $M$ satisfies
	\begin{align}
	\tr(1_M) &= \sr(1_M) = 3 \\
	1_M^{\#} &= 1_M \\
	\sr(m,1_M) &= 2\tr(m) \\
	1_M \# m & = \tr(m)1_M - m \\
	\sr(m) &= \tr(m^{\#}) \\
	2\sr(m) &= \tr(m)^2 - \tr(m^2).
	\end{align}
\end{assoccubic}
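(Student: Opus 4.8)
\medskip
\noindent\emph{Proof proposal.} The plan is to derive all six identities by three elementary manipulations applied to the defining data: substituting $m=1_M$, invoking the homogeneity (Euler) relation of the cubic norm $n$, and polarizing. The only inputs required are $n(1_M)=1$, the definitions $\tr(m)=n(1_M,m)$ and $\sr(m)=n(m,1_M)$, the formula $m^{\#}=m^2-\tr(m)m+\sr(m)1_M$ together with $m \# m'=(m+m')^{\#}-m^{\#}-m'^{\#}$, and the two relations $n(m,m')=\tr(m^{\#},m')$ and $\tr(m,m')=\tr(mm')$ displayed above.

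First I would settle the identities involving only $1_M$. By homogeneity of degree $3$ (the Euler relation $n(x,x)=3n(x)$) one gets $\tr(1_M)=n(1_M,1_M)=3n(1_M)=3$ and, by the same token, $\sr(1_M)=n(1_M,1_M)=3$. Putting $m=1_M$ in $m^{\#}=m^2-\tr(m)m+\sr(m)1_M$ and using $1_M^2=1_M$ then gives $1_M^{\#}=(1-\tr(1_M)+\sr(1_M))1_M=1_M$.

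Next I would prove $\sr(m)=\tr(m^{\#})$ and read off $2\sr(m)=\tr(m)^2-\tr(m^2)$ from it. Setting $m'=1_M$ in $n(m,m')=\tr(m^{\#},m')$ and then applying $\tr(a,b)=\tr(ab)$ with $1_M$ the unit of $M$ yields $\sr(m)=n(m,1_M)=\tr(m^{\#},1_M)=\tr(m^{\#}\cdot 1_M)=\tr(m^{\#})$. Applying the linear form $\tr$ to $m^{\#}=m^2-\tr(m)m+\sr(m)1_M$ and using $\tr(1_M)=3$ gives $\tr(m^{\#})=\tr(m^2)-\tr(m)^2+3\sr(m)$, and comparing with $\sr(m)=\tr(m^{\#})$ produces $2\sr(m)=\tr(m)^2-\tr(m^2)$. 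The identity $\sr(m,1_M)=2\tr(m)$ then follows by linearizing this relation: replace $m$ by $m+1_M$, expand both sides using $\tr$ linear, $\tr(1_M)=\sr(1_M)=3$, and $(m+1_M)^2=m^2+2m+1_M$; the terms $2\sr(m)+6$ cancel, leaving $2\sr(m,1_M)=4\tr(m)$.

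Lastly, for $1_M \# m=\tr(m)1_M-m$ I would expand $1_M \# m=(1_M+m)^{\#}-1_M^{\#}-m^{\#}$, writing $(1_M+m)^{\#}=(1_M+m)^2-\tr(1_M+m)(1_M+m)+\sr(1_M+m)1_M$ and substituting $(1_M+m)^2=1_M+2m+m^2$, $\tr(1_M+m)=3+\tr(m)$, and $\sr(1_M+m)=\sr(1_M)+\sr(1_M,m)+\sr(m)=3+2\tr(m)+\sr(m)$ (using symmetry of the polar form $\sr(\,\cdot\,,\,\cdot\,)$ and the value $\sr(m,1_M)=2\tr(m)$ just proved); subtracting $1_M^{\#}=1_M$ and $m^{\#}=m^2-\tr(m)m+\sr(m)1_M$ makes the $m^2$-term and the $\sr(m)1_M$-term cancel, and the surviving coefficients collapse to $\tr(m)1_M-m$. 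The whole argument is computational and I do not expect a genuine obstacle; the main nuisance is keeping the linearization conventions consistent — the Euler relation, the symmetry of the polar forms, and which argument of $n(\,\cdot\,,\,\cdot\,)$ produces $\tr$ versus $\sr$ — and collecting terms correctly in the expansion of $(1_M+m)^{\#}$.
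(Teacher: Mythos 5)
Your derivation is correct, and it is worth noting that the paper itself gives no proof of this proposition --- it simply defers to McCrimmon's \emph{A Taste of Jordan Algebras} --- so your computation supplies a self-contained argument for what the paper leaves to the reference. The order you choose (Euler relation for the $1_M$-identities; then $\sr(m)=\tr(m^{\#})$ from $n(m,m')=\tr(m^{\#},m')$ and $\tr(m,m')=\tr(mm')$ with $m'=1_M$; then tracing the adjoint formula to get $2\sr(m)=\tr(m)^2-\tr(m^2)$; then linearizing; then expanding $(1_M+m)^{\#}$) is essentially the standard one, and I verified the final expansion: the $m^2$ and $\sr(m)1_M$ terms cancel and the remaining coefficients do collapse to $\tr(m)1_M-m$. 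Two small points deserve explicit mention. First, the linearization $n(\,\cdot\,,\,\cdot\,)$ is \emph{not} symmetric --- the first slot is quadratic and the second linear, which is exactly why $\tr(m)=n(1_M,m)$ and $\sr(m)=n(m,1_M)$ are different forms --- so the equality $\tr(1_M)=\sr(1_M)=n(1_M,1_M)=3n(1_M)$ holds only because both arguments coincide there; you use this correctly but should state it, since confusing the two slots is the easiest way to go wrong in this calculus. Second, your passage from $2\sr(m,1_M)=4\tr(m)$ to $\sr(m,1_M)=2\tr(m)$ cancels a factor of $2$; this is harmless under the paper's standing hypothesis $\ch(k)\neq 2$, but since the surrounding construction is stated over an arbitrary unital commutative ring $R$ you should either invoke that hypothesis explicitly or obtain the identity by a $2$-torsion-free argument.
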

A proof of this can be found in \cite{McC05}.  For the remainder of the construction we follow Petersson's notes from the Fields Institute workshop on exceptional algebras and groups, but this can also be found in \cite{McC05}.  

\newtheorem{ftc}[subsubsection]{First Tits Construction}
\begin{ftc}
Let $n$ be the cubic norm form of a degree $3$ associative algebra $M$ over $R$, and let $\nu \in R^{*}$.  We define a module 
	\[ J(M,\nu)= M_0 \oplus M_1 \oplus M_2, \]
to be the direct sum of three copies of $M$, and define $1_M$, $N$, $\Tr$, and $\#$ by
	\begin{align}
	1_M &= \id \oplus 0 \oplus 0 \\
	N(m) &= n(m_0)+\nu n(m_1) + \nu^2 n(m_2) - \tr(m_0m_1m_2) \\
	\Tr(m) &= \tr(m_0) \\
	\Tr(m,m') &= \tr(m_0,m'_0) + \tr(m_1,m'_2) + \tr(m_2,m'_1) \\
	m^{\#} &= (m_0^{\#} - \nu m_1m_2) \oplus (\nu m_2^{\#} - m_0m_1) \oplus (m_1^{\#} - m_2m_0) \\
	(mm')^{\#} &= m'^{\#}m^{\#},
	\end{align}
where $n$ and $\tr$ are the norm and trace on $M$ for $m= (m_0 , m_1 , m_2)$ and $m'= (m'_0 , m'_1 , m'_2)$.  Then $(N,\#, 1_M)$ is a sharped cubic form and $J(N,\#,1_M)$ is a Jordan algebra.
\end{ftc}

We want to construct a split Albert algebra over a field $k$.  Let us denote by $\Mat_3(k)$ the $3 \times 3$ matrices over a field $k$.  To perform our construction we pick our associative algebra $\Mat_3(k)$ with $n$ the determinant, $\tr$ is the typical trace of a matrix, $1_M=\id$, and $\nu=1$.  For the remainder of the paper we drop the $\#$ from our notation, and refer the reader to $(3.20)$ for the formula.

\newtheorem{ftcsplit}[subsubsection]{Proposition}
\begin{ftcsplit}
The algebra $J(\Mat_3(k), 1) = A_0 \oplus A_1 \oplus A_2$ is a split Albert algebra.
\end{ftcsplit}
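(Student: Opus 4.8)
The plan is to verify that $J(\Mat_3(k),1)$ is a $27$-dimensional Albert algebra and that it is split. First I would observe that $\Mat_3(k)$ is a unital associative algebra of degree $3$ over $k$, with the determinant as cubic norm form $n$, the usual matrix trace as $\tr$, and $\sr(m) = \tfrac12(\tr(m)^2 - \tr(m^2))$ the second coefficient of the characteristic polynomial; the Cayley--Hamilton theorem is exactly the identity $(3.13)$, and the adjugate matrix realizes $m^{\#}$ with $m\cdot m^{\#} = m^{\#}\cdot m = \det(m)1_M$. Thus $\Mat_3(k)$ satisfies all the hypotheses of the First Tits Construction with $\nu = 1 \in k^{*}$, so by that construction $J(\Mat_3(k),1) = A_0\oplus A_1\oplus A_2$ carries a sharped cubic form $(N,\#,1_M)$ and hence, by the McCrimmon proposition, is a unital Jordan algebra under the product $xy = \tfrac12(x\# y + \Tr(x)y + \Tr(y)x - \Sr(x,y)1_M)$. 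Counting dimensions, each $M_i \cong \Mat_3(k)$ has dimension $9$, so $\dim_k J(\Mat_3(k),1) = 27$.

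Next I would argue that this $27$-dimensional Jordan algebra is an Albert algebra, i.e.\ a form of $H_3(C,\gamma)$ for an octonion algebra $C$. The standard fact (see \cite{McC05} or \cite{SV00}) is that any algebra produced by the first Tits construction from a degree-$3$ associative algebra of dimension $9$ is an exceptional simple Jordan algebra of degree $3$ and dimension $27$; in particular $N$ is a nondegenerate cubic form and the algebra is simple. By the structure theory of such algebras, $J(\Mat_3(k),1) \cong H_3(C,\gamma)$ for some octonion $k$-algebra $C$ and some diagonal $\gamma$, and by Theorem \ref{thm5} this $C$ is uniquely determined. It remains to identify $C$ and show it is split.

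To see that $C$ is split, I would exhibit zero divisors or, more directly, produce an idempotent decomposition forcing the coordinate algebra to be split. The key point is that $\Mat_3(k)$ is itself split (it contains the diagonal idempotents $e_{11},e_{22},e_{33}$), and one checks that $A_0 = \Mat_3(k)\oplus 0\oplus 0$ is a subalgebra of $J(\Mat_3(k),1)$ isomorphic to the Jordan algebra $\Mat_3(k)^{+} \cong H_3(k\oplus k,\gamma')$ attached to the split quaternion-like coordinate algebra $k\oplus k$; equivalently, one displays a frame of three orthogonal primitive idempotents in $J(\Mat_3(k),1)$ whose Peirce-$1$ spaces are split composition algebras. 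Concretely, taking $\gamma = (1,1,1)$ and tracking how the Tits multiplication acts on the off-diagonal matrix units shows the relevant norm form on the coordinate algebra is the hyperbolic (split) one. Hence $C$ is the split octonion algebra and $J(\Mat_3(k),1) \cong H_3(C,\mathrm{id})$ is a split Albert algebra.

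The main obstacle is the middle step: rigorously checking that the data $(N,\Tr,\#,1_M)$ in the First Tits Construction indeed satisfy the sharped cubic form axioms $(3.1)$--$(3.3)$ for the specific choice $M = \Mat_3(k)$, $\nu = 1$ — this is where the associativity identities $(3.8)$--$(3.10)$ and the adjugate identity $m\cdot m^{\#} = \det(m)1_M$ get used in a somewhat involved computation — together with the identification of the coordinate octonion algebra as split rather than division. The dimension count and the invocation of Theorem \ref{thm5} are routine once those verifications are in place, and I would cite \cite{McC05} for the general sharped-cubic-form verification rather than reproduce it.
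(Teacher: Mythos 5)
Your proposal is correct and follows essentially the same route as the paper: exhibit an idempotent to see the algebra is reduced (hence of the form $H_3(C,\gamma)$), then observe that $A_0\oplus\{0\}\oplus\{0\}\cong \Mat_3(k)^{+}\cong H_3(k\oplus k)$ forces the coordinate octonion algebra $C$ to contain the split composition algebra $k\oplus k$ and therefore to be split. The only caution is that your intermediate appeal to ``structure theory'' to get $J(\Mat_3(k),1)\cong H_3(C,\gamma)$ already requires reducedness (a division Albert algebra is not of that form), so the idempotent observation should come before, not after, that identification --- exactly the order in which the paper arranges it.
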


\begin{proof}
The algebra $J(\Mat_3(k),1)$ is reduced, since it contains the primitive idempotent,
\[ \left( \begin{bmatrix}
	1 & \cdot & \cdot \\
	\cdot & \cdot & \cdot \\
	\cdot & \cdot & \cdot
	\end{bmatrix}, 0, 0 \right).
\]
So $J(\Mat_3(k),1) \cong H_3(C,\gamma)$, for some $\gamma$.  Also, $J(\Mat_3(k),1)$ contains a copy of $A_0 \oplus \{0\} \oplus \{0\} \cong \Mat_3(k)^+ \cong H_3(k \oplus k)$, which contains a copy of $k \oplus k$ by \cite{SV00}.  The composition algebra $k\oplus k$ is a split, which implies the octonion algebra $C \supset k \oplus k$, and so $H_3(C,\gamma)$ is split.
\end{proof}

Where $\Mat_3(k)^+$ is the Jordan algebra consisting of the vector space $\Mat_3(k)$ with the typical Jordan product.

\subsection{Automorphisms of an Albert algebra}

For our two presentations of the split Albert algebra over a given field, we will consider two different ways to assure linear maps are automorphisms.  For $H_3(C, \gamma)$ we can just check the map in question is a bijection and respects the Albert algebra multiplication.  When we consider the Albert algebra in the first Tits construction form, we can check that the map is a bijection and respects the norm and adjoint maps, and leaves the base point fixed.

It turns out that, for the most part the automorphisms we want to consider have order $2$, and so we will review some results of Jacobson concerning $\Aut(A)$ found in \cite{Ja68}.

\newtheorem{invjac2}[subsubsection]{Theorem, (Jacobson)}
	\begin{invjac2}
	Let $A$ be a finite dimensional exceptional central simple Jordan algebra.  Then $A$ is reduced if and only if $\Aut(A)$ contains elements of order two.  If the condition holds then any $s \in \Aut(A)$ having order two is either;
	\begin{enumerate}[$(I)$]
	\item a reflection in a sixteen dimensional central simple subalgebra of degree three,
	\item the center element $r_w \neq 1$ in a subgroup $\Aut(A)_w$, $w$ is a primitive idempotent.
	\end{enumerate}
	\end{invjac2}

\newtheorem{corinvjac2}[subsubsection]{Corollary}
	\begin{corinvjac2}
	\label{corinvjac2}
	An involution of type (I)  leaves a subalgebra $B_{K} \subset A_{K} \cong H_3(C,\gamma)$ where $K$ is algebraically closed, and $B_{K} \cong H_3(D, \gamma)$ where $D \subset C$ is a quaternion subalgebra over $K$.  If $A$ is split then any two automorphisms of $A$ of order two and type (II) are conjugate in $\Aut(A)$.  When $k$ is either finite or algebraically closed, then any two automorphisms of $A$ of order two and type (I) are conjugate in $\Aut(A)$.
 \end{corinvjac2}

\section{Automorphisms of groups of type $\F_4$}

In this section we see how an action of $\SL_3 \times \SL_3$ gives us a $k$-split maximal torus in $\Aut(A)$, a construction suggested by H. Petersson following  \cite{Sp298}.  Using the classification from \cite{He00} we use this $k$-split maximal torus to find representatives of the isomorphism classes of $k$-involutions of $\Aut(A)$.

\subsection{Action of $\SL_3(k) \times \SL_3(k)$ on $A \cong J(\Mat_3(k),1)$}

\newtheorem{slact}[subsubsection]{Proposition}
\begin{slact}
The map 
	\[ f_{uv} : A \to A, \text{ where } (a_0, a_1, a_2) \mapsto \left(u a_0 u^{-1}, u a_1 v^{-1}, v a_2 u^{-1} \right), \]
is an automorphism of $A$ if and only if $(u,v) \in \SL_3(k) \times \SL_3(k)$.  
\end{slact}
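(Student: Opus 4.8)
The plan is to verify that $f_{uv}$ respects the three structural pieces that characterize automorphisms of the first Tits construction: it fixes the base point $1_M = \id \oplus 0 \oplus 0$, it preserves the cubic norm $N$, and it preserves the adjoint (sharp) map $m \mapsto m^{\#}$ as defined componentwise in the First Tits Construction; bijectivity is immediate once $(u,v) \in \SL_3(k) \times \SL_3(k)$ since $f_{u^{-1}v^{-1}}$ is an inverse. By the remarks in Section 3.4, checking these three things suffices for the ``if'' direction. For the ``only if'' direction I would analyze what constraints must hold on a pair of matrices $(u,v) \in \GL_3(k) \times \GL_3(k)$ — or more generally on the component maps — for $f_{uv}$ to be norm-preserving, and show these force $\det u = \det v = 1$.

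For the forward direction, first note $f_{uv}(1_M) = (u\,\id\,u^{-1}, 0, 0) = 1_M$, so the base point is fixed. Next, write out $N(f_{uv}(m))$ using formula (3.17): the three terms $n(u a_0 u^{-1})$, $\nu\, n(u a_1 v^{-1})$, $\nu^2 n(v a_2 u^{-1})$ become $n(a_0)$, $\nu\, \det(u)\det(v)^{-1} n(a_1)$, $\nu^2 \det(v)\det(u)^{-1} n(a_2)$ by multiplicativity of the determinant, and the mixed term $\tr\bigl((u a_0 u^{-1})(u a_1 v^{-1})(v a_2 u^{-1})\bigr) = \tr(u a_0 a_1 a_2 u^{-1}) = \tr(a_0 a_1 a_2)$ by the trace identity and cyclicity. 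When $\det u = \det v = 1$ (and $\nu = 1$ in our application) all the determinant factors are $1$ and $N$ is preserved exactly. Then I would check the sharp map: using formula (3.20) with $\nu = 1$, the first component of $f_{uv}(m)^{\#}$ is $(u a_0 u^{-1})^{\#} - (u a_1 v^{-1})(v a_2 u^{-1}) = u(a_0^{\#} - a_1 a_2)u^{-1}$, since the adjoint of $u a_0 u^{-1}$ in $\Mat_3(k)$ is $u\, a_0^{\#}\, u^{-1}$ (because $M \mapsto M^{\#}$ is $\det(M) M^{-1}$ on invertible matrices, extended by density/polynomiality, and is conjugation-equivariant). This is exactly the first component of $f_{uv}(m^{\#})$. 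The second and third components work out the same way: $(u a_1 v^{-1})^{\#} \cdot \text{(sign)} - \dots = v(a_2^{\#} - a_0 a_1)\cdots$, matching up after one tracks the indices $0 \to 1 \to 2$ cyclically and the roles of $u$ and $v$ on the ``off-diagonal'' slots. Having matched base point, norm, and adjoint, $f_{uv} \in \Aut(A)$.

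For the converse, suppose $f_{uv}$ is an automorphism for some $(u,v) \in \GL_3(k) \times \GL_3(k)$ (one should first argue that any such structure-preserving map built from this formula forces $u,v$ invertible, or simply restrict attention to invertible $u,v$ as the proposition implicitly does). Evaluate norm-preservation on an element supported in the $M_1$ slot, say $m = (0, a_1, 0)$ with $n(a_1) \neq 0$ (e.g. $a_1 = \id$): then $N(m) = \nu\, n(a_1)$ while $N(f_{uv}(m)) = \nu\, n(u a_1 v^{-1}) = \nu \det(u)\det(v)^{-1} n(a_1)$, forcing $\det(u) = \det(v)$. Similarly testing on $(0,0,a_2)$ with $a_2 = \id$ gives nothing new, but testing norm-preservation against an element like $(a_0, a_1, a_2)$ with all three parts invertible and comparing the coefficient structure — or, more cleanly, using that the norm of a generic element of $M_0$ must be preserved, $n(u a_0 u^{-1}) = n(a_0)$ automatically, so the real constraint is the balance between slots — I would extract $\det(u)\det(v) = 1$ as well (for instance by pairing the $M_1$ and $M_2$ computations, or by evaluating $\Tr$ and the requirement that the full cubic form, not just homogeneous pieces, be invariant). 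Together $\det(u) = \det(v)$ and $\det(u)\det(v) = 1$ give $\det(u)^2 = 1$; to rule out $\det u = \det v = -1$ one checks the sharp map on the $M_1$ component, where the coefficient $\nu$ versus $\det$-twisted $\nu$ must agree, pinning down $\det u = \det v = 1$.

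The main obstacle I anticipate is the converse direction: isolating exactly which test elements force $\det u = \det v = 1$ rather than merely $\det(u)\det(v)^{-1} = 1$ and $\det(u)\det(v) = 1$ (which would still permit $\det = -1$). The cleanest fix is probably to invoke preservation of the sharp map — the componentwise formula (3.20) is not homogeneous of a single degree in a way that tolerates a scalar sign twist between the $M_0$ slot and the $M_1, M_2$ slots — so I expect the adjoint-compatibility computation, rather than norm-compatibility alone, to be where the precise condition ``$\det = 1$'' (as opposed to ``$\det = \pm 1$'') actually gets nailed down. A secondary, more routine obstacle is simply the bookkeeping of the cyclic index pattern $(0,1,2)$ in the sharp-map formula and making sure the asymmetric roles of $u$ (acting on the left in slot $1$, the right in slot $2$) and $v$ are tracked consistently.
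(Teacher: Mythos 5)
Your ``if'' direction is essentially identical to the paper's proof: the paper also verifies that $f_{uv}$ fixes the base point, preserves $N$ via multiplicativity of the determinant and cyclicity of the trace, and preserves the sharp map using $(mm')^{\#}=m'^{\#}m^{\#}$ together with $x^{\#}=x^{-1}$ for $x\in\SL_3(k)$; your route through $M^{\#}=\det(M)M^{-1}$ and conjugation-equivariance of the adjugate is the same computation in different clothing. That half is fine.

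The gap is in your converse, and it sits exactly where you predicted trouble --- but your proposed fix does not work. Norm preservation tested on $(0,\id,0)$ and $(0,0,\id)$ each yield only $\det(u)=\det(v)$, as you observe. You then hope to extract the second relation $\det(u)\det(v)=1$ from preservation of the sharp map, on the grounds that (3.20) is not homogeneous of a single degree. It is not available there either: $(ua_0u^{-1})^{\#}=ua_0^{\#}u^{-1}$ holds for \emph{every} invertible $u$, while the slot-$1$ and slot-$2$ components of $f_{uv}(m)^{\#}$ acquire the factors $\det(u)^{-1}\det(v)$ and $\det(v)^{-1}\det(u)$ respectively, so sharp-preservation is again equivalent to $\det(u)=\det(v)$ and nothing more. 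Consequently $f_{uv}$ fixes $1_M$, preserves $N$, and preserves $\#$ whenever $\det(u)=\det(v)=\lambda$ for an arbitrary $\lambda\in k^*$, and is therefore an automorphism for all such pairs; the literal ``only if'' cannot be proved because it fails on the nose. (Take $k=\mathbb{Q}$ and $\det(u)=\det(v)=2$: since $f_{uv}$ depends on $(u,v)$ only up to a common scalar factor, and $2$ is not a cube in $\mathbb{Q}$, this automorphism does not arise from any pair in $\SL_3(k)\times\SL_3(k)$.) The correct converse is ``$f_{uv}\in\Aut(A)$ if and only if $\det(u)=\det(v)$.'' Note that the paper's own proof silently restricts itself to the ``if'' direction, so you are not missing an argument the paper supplies; but your write-up asserts that $\det(u)=\det(v)=1$ can be forced from structure-preservation, and that assertion is wrong.
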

\begin{proof}
In order for $f_{uv}$ to be an automorphism of $A \cong J(\Mat_3(k), 1)$ we need $f_{uv}$ to preserve the base point, the sharp map and the norm.   A map of the form $f_{uv}$ will clearly fix the basepoint $1_A = ( \id , 0 , 0 )$ for any $u\in \GL_3(k)$.  Next we check that $f_{uv}$ preserves the norm.  So we look at 
	\begin{align*}
		N \left( f_{uv}(a) \right) &= N \left( ( u a_0 u^{-1}, u a_1 v^{-1}, v a_2 u^{-1} ) \right) \\
			&= n\left( u a_0 u^{-1} \right) + n \left( u a_1 v^{-1} \right) + n \left( v a_2 u^{-1} \right) - \tr \left(u a_0 u^{-1} u a_1 v^{-1} v a_2 u^{-1} \right) \\
			&= n(u) n(a_0) n(u^{-1}) + n(u) n(a_1) n(v^{-1}) + n(v) n(a_2) n(u^{-1}) - \tr( u a_0 a_1 a_2 u^{-1} ) \\
			&= n(a_0) + n(a_1) + n(a_2) - \tr ( a_0 a_1 a_2 ) \\
			&= N(a).
	\end{align*}
	
Let us recall $a_i^{-1} = a_i^{\#}n(a_i)^{-1}$ and look at the sharp map
	\[ a^{\#} = \left( a_0^{\#} - a_1a_2 ,  a_2^{\#} - a_0a_1 , a_1^{\#} - a_2a_0 \right). \]
then 
	\begin{align*}
	f_{uv}\left( a^{\#} \right) &= f_{uv} \left( a_0^{\#} - a_1a_2 ,  a_2^{\#} - a_0a_1 , a_1^{\#} - a_2a_0 \right) \\
		&= \left( u(a_0^{\#} - a_1a_2)u^{-1}, u(a_2^{\#} - a_0a_1)v^{-1} , v(a_1^{\#} - a_2a_0)u^{-1} \right) \\
		&= \left( u a_0^{\#} u^{-1} -  u a_1a_2 u^{-1}, u a_2^{\#} v^{-1} -  u a_0a_1 v^{-1} , v a_1^{\#} u^{-1} - v a_2a_0 u^{-1} \right) \\
		&= \left( u a_0^{\#} u^{-1} -  u a_1 v^{-1} v a_2 u^{-1}, u a_2^{\#} v^{-1} -  u a_0 u^{-1} u a_1 v^{-1} , v a_1^{\#} u^{-1} - v a_2 u^{-1} u a_0 u^{-1} \right).
	\end{align*}
Since $a_i^{\#} = a_i^2 - \tr(a_i) a_i + \sr(a_i) c$ and $x^{-1} = x^{\#}$ if and only if $x \in \SL_3(k)$,
	\begin{align*}
	x \left(a_i^{\#} \right) y^{-1}  &= \left( x^{-1} \right) \left( a_i^{\#} \right) \left( y \right)^{\#} \\
		&= \left( x^{-1} \right) \left( y a_i \right)^{\#} \\
		&= \left( y a_i x^{-1} \right)^{\#}.
	\end{align*}
For $a_0$ let $x=y=u$, $a_1$ let $x=u$ and $y=v$, $a_2$ let $x=v$ and $y=u$.
\end{proof}

\subsection{$k$-split maximal torus}

Now if we consider a $k$-split maximal torus in $\SL_3(k)$, 
	\[ T_*= \left\{
	\begin{bmatrix}
	u_1 & & \\
	 & u_1^{-1}u_2 & \\
	  & & u_2^{-1}
	\end{bmatrix}
	\Bigg|
	\ u_1,u_2 \in k \right\} \subset \SL_3(k),
	\]
we see that it is of rank $2$ and so a $k$-split maximal torus $T_* \times T_* \subset \SL_3(k) \times \SL_3(k) \subset \Aut(A,k)$ must have rank $4$ and we have that $\mathcal{T}=T_* \times T_* \subset \Aut(A,k)$ is a $k$-split maximal torus.  In accordance with the action of $f_{uv}$ on $A$, an element of our $k$-split maximal torus can be computed directly.  

\subsection{$k$-involutions}

From the combinatorial classification of invariants of type (1) given in \cite{He00} we know we should have two involutions corresponding to the following diagrams.

\begin{center} \hspace{25pt}
\begin{picture}(120,24)(10,10)
\put(0,0){\circle{6}}
\put(25,0){\circle{6}}
\put(50,0){\circle{6}}
\put(75,0){\circle{6}}
\put(0,6){\makebox(0,0)[b]{\scriptsize $\alpha_1$}}
\put(25,6){\makebox(0,0)[b]{\scriptsize $\alpha_2$}}
\put(50,6){\makebox(0,0)[b]{\scriptsize $\alpha_3$}}
\put(75,6){\makebox(0,0)[b]{\scriptsize $\alpha_4$}}
\put(3,0){\line(1,0){19}}
\put(28,-1){\line(1,0){19}}
\put(28,1){\line(1,0){19}}
\put(34,-2.5){$>$}
\put(53,0){\line(1,0){19}}
\end{picture}

\vspace{.25cm}

\hspace{25pt}
\begin{picture}(120,24)(10,10)
\put(0,0){\circle*{6}}
\put(25,0){\circle*{6}}
\put(50,0){\circle*{6}}
\put(75,0){\circle{6}}
\put(0,6){\makebox(0,0)[b]{}}
\put(25,6){\makebox(0,0)[b]{}}
\put(50,6){\makebox(0,0)[b]{}}
\put(75,6){\makebox(0,0)[b]{\scriptsize $\beta_4$}}
\put(3,0){\line(1,0){19}}
\put(27.55,-1){\line(1,0){19.55}}
\put(27.55,1){\line(1,0){19.55}}
\put(34,-2.5){$>$}
\put(53,0){\line(1,0){19}}
\end{picture}
\end{center}

\vspace{1cm}

Let $A$ be defined over $k$ the first diagram corresponds to a $k$-involution $\theta:\Aut(A) \to \Aut(A)$ that splits an entire $k$-split maximal torus, and the second diagram corresponds to a $k$-involution $\sigma: \Aut(A) \to \Aut(A)$ that splits a rank $1$ $k$-split torus and fixes a  $k$-split torus of rank $3$.

\subsection{$k$-inner elements}

We have two isomorphism classes of invariants of type (1) as set out by \cite{He00}.  Once we describe the $G(k)$-isomorphism classes of $k$-inner elements for our representatives of admissible $k$-involutions of type (1), our task will be complete.  We will follow \cite{Ja68} and call automorphisms in the equivalence class of $k$-involutions of the form $\theta \circ \I_t$, $t\in I_k(\mathcal{T}_{\theta}^-)$ of type (I), and those of the form $\sigma \circ \I_s$, $s \in I_k(\mathcal{T}_{\sigma}^-)$ of type (II).

By \ref{corinvjac2} all elements of order $2$ in $\Aut(A)$ of type (II) are conjugate for any $k$ where $\ch(k) \neq 2$.  So we have the following theorem.

\newtheorem{sigmaiso}[subsubsection]{Theorem}
\begin{sigmaiso}
\label{sigmaiso}
All $k$-involutions of the form $\sigma \circ \I_s$, $s \in I_k(\mathcal{T}_{\sigma}^-)$ are isomorphic.
\end{sigmaiso}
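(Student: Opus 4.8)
The plan is to reduce the statement directly to Corollary \ref{corinvjac2}. First I would recall the setup: $\sigma$ is (a representative of) the admissible $k$-involution of type (1) corresponding to the second diagram, and by the convention fixed just before the theorem, every automorphism of the form $\sigma \circ \I_s$ with $s \in I_k(\mathcal{T}_{\sigma}^-)$ is an involution of type (II) in the sense of Jacobson's Theorem, i.e. it is the central element $r_w \neq 1$ of some $\Aut(A)_w$ for a primitive idempotent $w$. Since $A$ is the split Albert algebra over $k$ (and $\ch(k) \neq 2$ throughout), $A$ is reduced, so Jacobson's Theorem applies and such order-two elements exist.

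The key step is then to invoke the second assertion of Corollary \ref{corinvjac2}: when $A$ is split, any two automorphisms of $A$ of order two and type (II) are conjugate in $\Aut(A)$. Thus if $s, s' \in I_k(\mathcal{T}_{\sigma}^-)$, both $\sigma \circ \I_s$ and $\sigma \circ \I_{s'}$ are order-two type-(II) automorphisms of the split Albert algebra $A$, hence conjugate by some $g \in \Aut(A)$; conjugation by $g$ is precisely an isomorphism of $k$-involutions in the sense of \cite{He00}. I would also note the base case: taking $s$ to be the identity element of $\mathcal{T}_{\sigma}^-$ shows every $\sigma \circ \I_s$ is isomorphic to $\sigma$ itself, so there is a single isomorphism class. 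A small point to address is that $g$ should be taken in $\Aut(A)$ over $k$ (equivalently, that the conjugacy in Corollary \ref{corinvjac2} can be realized $k$-rationally for the split form); this follows because the split Albert algebra and its primitive idempotents, together with the associated subgroups $\Aut(A)_w$, are all defined and split over $k$, so the conjugating element can be chosen in $\Aut(A)(k) = \Aut(A,k)$.

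The main obstacle I anticipate is not the algebra but the bookkeeping: one must make sure the set $I_k(\mathcal{T}_{\sigma}^-)$ is nonempty and that every element of it genuinely produces a type-(II) involution rather than, say, the identity or a type-(I) involution — in other words, that the labelling of $\sigma$ by the second Satake-type diagram forces $\theta \circ \I_s$ into case (II) of Jacobson's theorem for all admissible $s$. This is essentially guaranteed by the combinatorial classification of invariants of type (1) from \cite{He00} quoted above, which says the representative $\sigma$ fixes a rank-$3$ $k$-split torus; twisting by an inner element $\I_s$ with $s$ in the $(-1)$-eigenspace torus cannot change the Jacobson type. I would state this as the one nontrivial input and then the theorem is immediate from Corollary \ref{corinvjac2}.
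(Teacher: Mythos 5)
Your proposal is correct and follows essentially the same route as the paper: observe that each $\sigma \circ \I_s$ is realized by an order-two element of $\Aut(A)$ of Jacobson type (II) (fixing the $11$-dimensional subalgebra), and then conclude by the conjugacy statement in Corollary \ref{corinvjac2} for the split Albert algebra. Your additional remarks on $k$-rationality of the conjugating element and on why the twist stays of type (II) are reasonable elaborations of points the paper leaves implicit.
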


\begin{proof}
These automorphisms are inner automorphisms of the form $\sigma\circ \I_s(x) = gxg^{-1}$ with $g \in \Aut(A)$ being of order $2$ and leaving an $11$ dimensional subalgebra fixed.  So $g$ is an automorphism of $A$ of type (II), and so the result is immediate from \ref{corinvjac2}.
\end{proof}

The isomorphism classes of $k$-involutions of the form $\theta \circ \I_t$, $t\in I_k(\mathcal{T}_{\theta}^-)$ are said to be of type (I) and leave a $15$ dimensional subalgebra fixed that is isomorphic to $H_3(D,\gamma)$ where $D \subset C$ is a quaternion subalgebra of a split octonion algebra.  In order to classify $k$-involutions of this type we will need the following Lemma.

\newtheorem{order2fixlem}[subsubsection]{Lemma}
\begin{order2fixlem}
\label{order2fixlem}
Let $\mathcal{A}$ be a $k$-algebra and $\mathcal{D}$ and $\mathcal{D}'$ subalgebras of $\mathcal{A}$.  If $t,t' \in \Aut(\mathcal{A})$ are elements of order $2$ and $t,t'$ fix $\mathcal{D},\mathcal{D}'$ elementwise respectively, then $t \cong t'$ if and only if $\mathcal{D} \cong \mathcal{D}'$ over $k$.
\end{order2fixlem}

\begin{proof}
Let $\mathcal{D},\mathcal{D}' \subset \mathcal{A}$ such that $t(a)=a$ and $t'(a') = a'$ for all $a \in \mathcal{D}$ and $a' \in \mathcal{D}'$, and let $\mathcal{D}$ and $\mathcal{D}'$ be the largest such subalgebras with respect to $t$ and $t'$.  First we will show sufficiency.  Let $\mathcal{D} \cong \mathcal{D}'$, and let $g\in \Aut(\mathcal{A})$ be such that $g(\mathcal{D}')=\mathcal{D}$.  For $c \in \mathcal{A}$ $c = a+b$ where $a \in \mathcal{D}$ and $b\in \mathcal{A} - \mathcal{D}$.  Since $t,t'$ are of order $2$ we have $\mathcal{D}$ (resp. $\mathcal{D}$) is the $1$-space and $\mathcal{A} - \mathcal{D}$ (resp. $\mathcal{A}-\mathcal{D}'$) is the $(-1)$-space of $t$ (resp. $t'$).  Then we have,
\begin{align*}
gt'g^{-1}(a+b) &= gt'(a'+b') \\
	&= g(a'-b') \\
	&=a - b \\
	&= t(a+b),
\end{align*}
since $g(\mathcal{A}-\mathcal{D}') = \mathcal{A}-\mathcal{D}$.   To show necessity we start by assuming there exists a $g\in \Aut(\mathcal{A})$ such that $gt'g^{-1} = t$, which implies that $t'g^{-1} = g^{-1}t$, and from this we see that
\begin{align*}
t'g^{-1}(a+b) &= g^{-1}t(a+b) \\
	t'(g^{-1}(a) + g^{-1}(b) ) &= g^{-1}(a-b) \\
	t'(g^{-1}(a) + g^{-1}(b)) &= g^{-1}(a) - g^{-1}(b).
\end{align*}
This shows us that $g^{-1}(a) \in \mathcal{D}'$ for all $a\in \mathcal{D}$, and so $\mathcal{D} \cong \mathcal{D}'$.
\end{proof}

We say that two diagonal matrices, $\gamma, \gamma' \in (k^*)^3$, are \emph{equivalent} and write $\gamma \sim_C \gamma'$ if $H_3(C,\gamma) \cong H_3(C,\gamma')$, where $C$ is a composition algebra.

\newtheorem{subalgA}[subsubsection]{Proposition}
\begin{subalgA}
Let $D \subset C$ be a quaternion subalgebra of the octonion algebra $C$, over $k$.  If $H_3(D,\delta),H_3(D',\delta') \subset H_3(C,\gamma)$ then $H_3(D, \delta) \cong H_3(D', \delta')$ if and only if $D$ and $D'$ are split, or $D$ and $D'$ are division algebras and $\delta \sim_C \delta' \in k^*/q_D(C)^*$.
\end{subalgA}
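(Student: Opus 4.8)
The plan is to prove both implications at once, separating the case where $D$ (equivalently $D'$) is split from the case where it is a division algebra, and reducing the isomorphism problem for these fifteen-dimensional subalgebras to the structure theory of reduced Jordan algebras of degree three.

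First I would identify the coordinate algebra. Each of $H_3(D,\delta)$, $H_3(D',\delta')$ is a proper reduced $J$-algebra: it is by construction of the form $H_3(\cdot,\cdot)$ over a composition algebra and it contains the diagonal idempotent $h(1,0,0;0,0,0)\neq 0,e$. Hence Theorem~\ref{thm5} applies and an isomorphism $H_3(D,\delta)\cong H_3(D',\delta')$ forces $D\cong D'$ as composition algebras. Now $D$ and $D'$ are four-dimensional composition algebras (quaternion subalgebras of the split octonion algebra $C$), so each is either split --- then isomorphic to the unique split quaternion algebra $M_2(k)$ --- or a division algebra; and conversely every quaternion algebra over $k$ embeds in $C$, its norm form being a four-dimensional subform of the hyperbolic $q_C$. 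Thus the dichotomy in the statement is exhaustive, and it remains to prove: (a) if $D,D'$ are split then $H_3(D,\delta)\cong H_3(D',\delta')$ for all $\delta,\delta'$; and (b) if $D\cong D'$ is a division algebra then $H_3(D,\delta)\cong H_3(D,\delta')$ exactly when $\delta\sim_C\delta'$ in $k^*/q_D(C)^*$.

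For (a) I would use that the norm form $q_D$ of $M_2(k)$ is hyperbolic, hence surjective onto $k^*$. By the classical isomorphism criterion for reduced degree-three Jordan algebras (\cite{Ja68},\cite{SV00}), the isomorphism class of $H_3(D,\delta)$ is controlled by $D$ together with the similarity class of the twelve-dimensional quadratic form $\langle\delta_2\delta_3^{-1}\rangle q_D\perp\langle\delta_3\delta_1^{-1}\rangle q_D\perp\langle\delta_1\delta_2^{-1}\rangle q_D$ visible in the norm $Q$ written above; when $q_D$ is hyperbolic one has $\langle a\rangle q_D\cong q_D$ for every $a\in k^*$, so this form is hyperbolic independently of $\delta$ and all the $H_3(D,\delta)$ coincide. (Concretely, surjectivity of $q_D$ lets one absorb the $\delta_i$ by conjugating with a diagonal matrix $\diag(c_1,c_2,c_3)$, $c_i\in D^*$ of prescribed norm, followed by a global rescaling.)

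For (b), with $D\cong D'$ a division quaternion algebra, I would invoke the same criterion in its sharp form: $H_3(D,\delta)\cong H_3(D,\delta')$ if and only if $\delta$ and $\delta'$ agree in $(k^*)^3$ up to a global scalar, a permutation of the indices, and componentwise multiplication by nonzero values of $q_D$ --- which is exactly the relation $\delta\sim_C\delta'$ read in $k^*/q_D(C)^*$, since the norm group $q_D(C)^*$ of $D$ sits inside $q_C(C)^*=k^*$ and, $C$ being split, $\sim_C$ itself adds only the scaling-and-permutation freedom. Concretely I would first normalize $\delta$ by a diagonal conjugation over $D$ and then show that an isomorphism of two normalized subalgebras, after composing with automorphisms of $D$ and with the permutation and diagonal automorphisms of $H_3(D,-)$, must preserve the standard frame, forcing the normalized parameters to agree; uniqueness of the coordinatization (Theorem~\ref{thm5}) keeps this bookkeeping honest. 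I expect the real obstacle to be exactly this: getting the reduced-Jordan-algebra isomorphism criterion in the precise form needed and ruling out identifications beyond diagonal conjugations, permutations, global rescalings and $\Aut(D)$ --- i.e. showing the automorphism group of a reduced $H_3(D,\gamma)$ collapses no more parameters than claimed --- and then matching the result to the quotient $k^*/q_D(C)^*$ appearing in the statement.
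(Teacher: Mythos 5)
Your argument is correct and rests on the same underlying structure theory as the paper, but it is assembled from different ingredients, and in fact it is the more complete of the two. The paper's own proof is two sentences: it cites the fact that every isomorphism between subalgebras of $H_3(C,\gamma)$ extends to an automorphism of the ambient algebra (Jacobson's extension theorem, restated later in the paper), and combines this with the uniqueness of the coordinate algebra (Theorem \ref{thm5}). You bypass the extension theorem entirely and invoke the isomorphism criterion for reduced degree-three Jordan algebras directly: uniqueness of coordinatization forces $D \cong D'$, hyperbolicity of the split quaternion norm collapses all $\delta$ in the split case, and the norm-class criterion ($\gamma$ up to global scaling, permutation, and componentwise multiplication by nonzero values of $q_D$) settles the division case --- which is precisely the ingredient the paper itself only cites later (Springer--Veldkamp 5.8.1, in the proof of Lemma \ref{mainlem}). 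This is a genuine improvement in transparency, since the extension theorem alone says nothing about \emph{when} two subalgebras are isomorphic, so the paper's terse proof tacitly leans on the same criterion you make explicit. The one soft spot is the one you flag yourself: you cite rather than prove the sharp form of that criterion (that the automorphism group of $H_3(D,\gamma)$ collapses no parameters beyond diagonal conjugation, permutation, rescaling and $\Aut(D)$); since the paper treats this as a citable standard fact as well, that is a fair appeal to the literature rather than a gap.
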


\begin{proof}
From \cite{SV00} every automorphism of a subalgebra of $H(C,\gamma)$ extends to an automorphism of $H(C,\gamma)$.  This fact along with \ref{thm5} gives us the result.
\end{proof}

If we consider $\theta \circ \I_{t}$ of type (I), notice that $\theta = \I_{g}$ where,
\[ g =
\begin{bmatrix}
p & \cdot & \cdot \\
\cdot & \cdot & p \\
\cdot & p & \cdot
\end{bmatrix}, 
\]
where $\I_p :\Mat_9(k) \to \Mat_9(k)$, and $p(x) = x^T$.  So the $k$-inner elements are of the form $t(u_1,u_2,v_1,v_2) = t \in \mathcal{T}_{\theta}^- = \{ t \in \mathcal{T} \ | \ \theta(t)=t^{-1} \}$ and $\theta \circ \I_t$ is a $k$-involution.  In this case $\mathcal{T}_{\theta}^-$ is a maximal torus, and all elements $t \in \mathcal{T}$ are such that $\theta \circ \I_t$ is a $k$-involution for all $k$.  To check that this is true we can simply notice that $\theta \circ \I_t = \I_g \circ \I_t = \I_{gt}$, and $(gt)^2 = \id$ for all $t \in \mathcal{T} = \mathcal{T}_{\theta}^-$.

If we let $t=t(u_1,u_2,v_1,v_2)$ then we can compute $A^{\theta}$, the subalgebra of $A$ fixed by the element of $\Aut(A)$ that induces the $k$-involution, $\theta \circ \I_t$.

We assume that our field is not of characteristic $2$ and we use the quadratic form $\frac{1}{2}\Tr(x^2)$ from \cite{Sp60}, to make an identification between $H_3(D,\gamma) \subset H_3(C,\id)$, and $A^{\theta} \subset J(\Mat_3(k),1)$.  We need to recall a theorem of Jacobson \cite{Ja68}.

\newtheorem{jacisoext}[subsubsection]{Theorem, (Jacobson)}
\begin{jacisoext}
Let $H\subset J$ and $H'\subset J$ be reduced simple subalgebras of degree $3$ of a reduced simple exceptional Jordan algebra $J$, if there is an isomorphism $H\cong H'$ then it can be extended to an automorphism in $J$.
\end{jacisoext}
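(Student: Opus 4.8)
The plan is to reduce the extension problem to a conjugacy statement in $\Aut(J)$ and then apply the extension theorem for automorphisms of subalgebras of a reduced Albert algebra. Write $J\cong H_3(C,\gamma)$ with $C$ the octonion algebra determined by $J$ (Theorem \ref{thm5}).

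\textbf{Reducing to conjugacy.} If $H$ were octonion-coordinatized it would have dimension $27=\dim J$, forcing $H=J$, hence $H'=J$ and $\phi\in\Aut(J)$ already; so I may assume $H\cong H_3(D,\delta)$ with $D$ an associative composition algebra (of dimension $1$, $2$, or $4$), and likewise $H'\cong H_3(D',\delta')$. It will be enough to show $H$ and $H'$ are conjugate under $\Aut(J)$: given $g\in\Aut(J)$ with $g(H)=H'$, the map $g^{-1}\circ\phi$ is an \emph{automorphism} of $H$, so by the subalgebra-extension result of \cite{SV00} (every automorphism of a subalgebra of $H(C,\gamma)$ extends to $\Aut(H(C,\gamma))$ --- the fact already invoked in the proof of the Proposition above classifying subalgebras $H_3(D,\delta)$) it extends to some $\tilde g\in\Aut(J)$, and then $g\circ\tilde g\in\Aut(J)$ restricts to $\phi$ on $H$. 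So from here I only use $H\cong H'$.

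\textbf{Passing to a common frame.} Since $H$ is reduced simple of degree $3$ it shares its unit with $J$ and contains a frame $e_1,e_2,e_3$ of pairwise orthogonal primitive idempotents with $e_1+e_2+e_3=1_J$; because $J$ has degree $3$ each $e_i$ is also primitive in $J$, so $\{e_i\}$ is a frame of $J$, and the chosen isomorphism $H\cong H'$ carries it to a frame of $J$ inside $H'$. Using the transitivity of $\Aut(J)$ on frames of a reduced simple exceptional Jordan algebra (standard; see \cite{SV00}), I would apply one automorphism of $J$ so that $H$ and $H'$ share the frame $\{e_i\}$.

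\textbf{Matching coordinate data and concluding.} Relative to $\{e_i\}$, decompose $J=\bigoplus_{i\le j}J_{ij}$ with $J_{ii}=ke_i$ and each off-diagonal $J_{ij}$ a rank-one free $C$-module, and restrict to get Peirce decompositions of $H$ and $H'$. Coordinatizing $H$ and $H'$ over their $(1,2)$-Peirce spaces presents them as $H_3(D,\delta)$ and $H_3(D',\delta')$ in which, via $J_{12}\cong C$, $D$ and $D'$ sit as composition subalgebras of $C$; Theorem \ref{thm5} then gives $D\cong D'$. Next I would invoke the extension property for composition algebras --- any isomorphism between composition subalgebras of $C$ extends to $\Aut(C)$ (see \cite{SV00}) --- to get $\chi\in\Aut(C)$ with $\chi(D)=D'$, and since $\Aut(C)$ acts entrywise on $H_3(C,\gamma)$ as automorphisms of $J$, after applying the induced automorphism I may assume $D=D'$ and that $H$, $H'$ are the standard subalgebras $H_3(D,\delta)$, $H_3(D,\delta')$ attached to the common frame. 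Finally, the classification of such subalgebras (the Proposition above, together with its analogue for $\dim D<4$) shows that the remaining discrepancy between $\delta$ and $\delta'$ is realized by an automorphism of $J$, explicitly a diagonal scaling $\gamma_i\mapsto n(d_i)\gamma_i$ with the $d_i$ invertible in $D$; after one more automorphism $H=H'$, giving the required conjugacy. The hard part will be the bookkeeping in this last step: the coordinatizations of $H$ and $H'$ must be chosen compatibly with a single coordinatization of $J$ so that the abstract isomorphism $D\cong D'$ is witnessed \emph{inside} the ambient $C$ (hence by an automorphism of $J$), and for the $\delta$-scaling one needs the right form of the subalgebra classification; these compatibilities, together with frame-transitivity, are where the exceptional structure of $J$ is genuinely used.
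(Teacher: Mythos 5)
First, a point of comparison: the paper offers no proof of this statement at all --- it is recalled verbatim from Jacobson \cite{Ja68} (``We need to recall a theorem of Jacobson'') --- so there is no argument of the paper's to measure yours against, and I am judging your sketch on its own. Your architecture is reasonable and roughly matches how the result is organized in the literature: reduce the extension problem to $\Aut(J)$-conjugacy of $H$ and $H'$, then invoke the automorphism-extension property for a single subalgebra. The reduction itself ($g^{-1}\circ\phi\in\Aut(H)$, extend, compose with $g$) is correct, as is the use of Theorem \ref{thm5} to get $D\cong D'$ and of the extension theorem for composition subalgebras to arrange $D=D'$ inside $C$.

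However, the two steps carrying all the weight are asserted rather than proved, and together they are essentially the entire content of Jacobson's theorem. (a) Transitivity of $\Aut(J)$ on frames over the ground field is not a harmless ``standard'' fact for a general reduced simple exceptional $J$: automorphisms preserve the trace form, so the orbit of a primitive idempotent $w$ is constrained by the isometry class of that form restricted to the Peirce spaces of $w$, and already for $J=H_3(\mathbb{O},\diag(1,-1,1))$ over $\mathbb{R}$ the diagonal idempotents split into two distinct $\Aut(J)$-orbits (Peirce-zero signatures $(2,8)$ versus $(10,0)$). So the conjugacy you need for the specific frames $\{e_i\}$ and $\{\phi(e_i)\}$ must actually be extracted from the strong coordinatization theorem and the Albert--Jacobson isomorphism criterion --- the same circle of ideas as the theorem itself --- not quoted as a general transitivity. (b) The final matching of $\delta$ with $\delta'$, which you concede is ``the hard part,'' is exactly where the theorem lives: a degree-$3$ subalgebra sharing the frame and with coordinate algebra $D\subset C$ need not be the standard $H_3(D,\delta)$ (its off-diagonal Peirce spaces are of the form $Dc_{ij}$ for elements $c_{ij}\in C$, and the $\delta$'s arise from their norms), and showing the residual discrepancy is always realized by an automorphism of $J$ is the substance of Jacobson's argument, not bookkeeping. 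Finally, there is a circularity hazard: the automorphism-extension property you import from \cite{SV00} and the classification of subalgebras $H_3(D,\delta)$ you lean on at the end are, in the standard treatments, proved together with (or as consequences of) the extension theorem, so they cannot be used as independent inputs without verifying that their proofs do not pass through the very statement being proved.
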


\newtheorem{jacisoextcor1}[subsubsection]{Corollary}
\begin{jacisoextcor1}
Let $C$ be an octonion algebra and $D\subset C$ a quaternion subalgebra, then $H_3(D,\delta)$ is a subalgebra of $H_3(C,\gamma)$ if and only if $H_3(C,\gamma) \cong H_3(C,\delta)$.
\end{jacisoextcor1}

\newtheorem{jacisoextcor2}[subsubsection]{Corollary}
\begin{jacisoextcor2}
Let $C$ be a split octonion algebra with quaternion subalgebra $D \subset C$ then $H_3(D,\delta)$ is a subalgebra for all $\delta \in (k^*)^3$.
\end{jacisoextcor2}
\begin{proof}
This follows from the fact that there is only one isomorphism class of algebras of the form $H_3(C,\gamma)$ when $C$ is split.
\end{proof}

With this in mind we look at isomorphism classes of algebras of the form $H_3(D,\gamma) \subset H_3(C,\id)$ where $C$ is a split octonion algebra.

\newtheorem{mainlem}[subsubsection]{Lemma}
\begin{mainlem}
\label{mainlem}
Let $C$ be a split octonion algebra with quadratic form $q$ and $D$ a quaternion subalgebra with quadratic form $q_D$.  Then
\begin{enumerate}[$(1)$]
\item if $k=K$ is algebraically closed there is one isomorphism class of the form $H_3(D,\gamma)$,
\item if $k=\mathbb{F}_p$ where $\ch(p) >2$ there is one isomorphism class of algebras of the form $H_3(D,\gamma)$,
\item if $k=\mathbb{R}$ there are $3$ isomorphism classes of algebras of the form $H_3(D,\gamma)$ corresponding to $D$ being split, and $D$ being a division algebra with $\gamma=\id$ or $\gamma=(-1,1,1)$,
\item if $k=\mathbb{Q}_p$ there are $2$ isomorphism classes of algebras of the form $H_3(D,\gamma)$ corresponding to $D$ being split or $D$ being a division algebra.
\item if $k=\mathbb{Q}$ there are an infinite number of isomorphism classes.
\end{enumerate}
\end{mainlem}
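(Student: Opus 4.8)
The plan is to classify such a subalgebra $H_3(D,\gamma)\subset H_3(C,\id)$ by two invariants: the isomorphism class of the quaternion algebra $D$, and, when $D$ is a division algebra, the class of $\gamma\in(k^*)^3$ under the equivalence in the Proposition characterizing when $H_3(D,\delta)\cong H_3(D',\delta')$. First I would note that, since $C$ is split, the Corollary asserting that $H_3(D,\delta)$ is a subalgebra of $H_3(C,\id)$ for every $\delta$ leaves $\gamma$ completely unconstrained, and that every quaternion $k$-algebra $D$ does occur as a subalgebra of the split octonion algebra: the Cayley--Dickson double $\mathrm{Cay}(D,1)=D\oplus D\ell$ with $\ell^{2}=1$ has norm form $n_{D}\otimes\langle 1,-1\rangle$, which is hyperbolic, so $\mathrm{Cay}(D,1)$ is the split octonion algebra and it contains $D$. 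Hence the subalgebras to be classified are exactly the $H_3(D,\gamma)$ with $D$ running over all quaternion $k$-algebras and $\gamma$ over $(k^*)^3$. By Theorem~\ref{thm5} the coordinate algebra $D$ is an invariant of $H_3(D,\gamma)$, so algebras with non-isomorphic $D$ are never isomorphic, and the total count is $1$ (the single split-$D$ class, since the \emph{split} case of the Proposition imposes no condition on $\gamma$) plus one contribution per isomorphism class of division quaternion $k$-algebra.

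Next I would count quaternion $k$-algebras through the $2$-torsion of the Brauer group. For $k=K$ algebraically closed, and for $k$ a finite field of odd characteristic, $\mathrm{Br}(k)=0$ (in the finite case by Wedderburn's theorem on finite division rings), so the only quaternion algebra is $M_{2}(k)$ and the count is $1$, which gives (1) and (2). For $k=\mathbb{R}$ the quaternion algebras are $M_{2}(\mathbb{R})$ and $\mathbb{H}$; for $k=\mathbb{Q}_{p}$ they are $M_{2}(\mathbb{Q}_{p})$ and the unique division quaternion algebra, since $\mathrm{Br}(\mathbb{Q}_{p})$ has exactly one element of order $2$. For $k=\mathbb{Q}$, the Albert--Brauer--Hasse--Noether theorem produces infinitely many division quaternion algebras (one for each finite set of places of even cardinality), each of which, by Theorem~\ref{thm5}, contributes a distinct isomorphism class of $H_3(D,\gamma)$; this is (5).

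It remains to count, for a division quaternion $D$ over $\mathbb{R}$ and over $\mathbb{Q}_{p}$, the elements $\gamma\in(k^{*})^{3}$ up to the relation of the Proposition, which I would read as the orbit set of $(k^{*})^{3}$ under permutation of the three coordinates, simultaneous rescaling by $k^{*}$, and componentwise rescaling by the reduced norm group $N_{D/k}(D^{*})$. Over $\mathbb{Q}_{p}$ the reduced norm of a division algebra over a non-archimedean local field is surjective onto $k^{*}$, so $k^{*}/N_{D/k}(D^{*})$ is trivial and there is a single $\gamma$-class, which with the split class gives $2$ and proves (4). Over $\mathbb{R}$ we have $N_{\mathbb{H}/\mathbb{R}}(\mathbb{H}^{*})=\mathbb{R}_{>0}$, so each coordinate of $\gamma$ survives only as a sign, and the action becomes the symmetric group together with the diagonal $\mathbb{Z}/2$ acting on $(\mathbb{Z}/2)^{3}$; this has two orbits, $\{(+,+,+),(-,-,-)\}$ and its complement, represented by $\gamma=\id$ and $\gamma=(-1,1,1)$, so there are $2$ division classes and $3$ in all, which is (3). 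The main obstacle I expect is the bookkeeping in this last step: extracting the precise equivalence relation from the Proposition — in particular keeping the norm group of $D$ separate from the norm group of $C$, since for the split $C$ the latter is all of $k^{*}$ and would wrongly collapse the $\mathbb{R}$ count to $2$ — and invoking the surjectivity of the reduced norm over $\mathbb{Q}_{p}$; everything else is standard Brauer-group arithmetic.
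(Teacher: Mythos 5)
Your proposal is correct and follows essentially the same route as the paper: split $D$ contributes a single class, division $D$ contributes classes indexed by $\gamma$ modulo permutation, simultaneous scaling, and the norm group $q_D(D)^*$, and the count then reduces to standard facts about quaternion algebras over each field. You supply a few details the paper delegates to citations (the Brauer-group counts, the explicit two-orbit computation on $(\mathbb{Z}/2)^3$ over $\mathbb{R}$, and the correct reading of the norm group as that of $D$ rather than of the split $C$), but the underlying argument is the same.
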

\begin{proof}
For (1) and (2) there are only split quaternion algebras, and therefore only split algebras of the form $H_3(D,\gamma)$.  For (3) when $k=\mathbb{R}$ it is well known, see for example \cite{SV00} chapter 1, there are 2 isomorphism classes of quaternion algebras.  If $D$ is split there is one isomorphism class of algebras of the form $H_3(D,\gamma)$.  If $D$ is a division algebra the isomorphism classes are determined by norm classes $k^*/q_D(D)^*$, 5.8.1 \cite{SV00}.  If $k=\mathbb{R}$ then $k^*/q_D(D)^* = \{\pm1\}$, which give us two equivalency classes of diagonal matrices of the form $\diag(\gamma_1,\gamma_2,\gamma_3)$, where $\gamma_i \in \mathbb{R}^*$.  One can be represented by $\diag(1,1,1)$ and the other by $\diag(-1,1,1)$.  For (4) again we get two isomorphism classes of quaternion algebras, and since $q_D(D)$ represents all values in $\mathbb{Q}_p$, see \cite{Se97, SV00}, so $k^*/q_D(D)^*=\{1\}$ and all matrices of the form $\diag(\gamma_1,\gamma_2,\gamma_3) \in (k^*)^3$ are equivalent.  To see (5) notice that there are an infinite number of isomorphism classes of quaternion division algebras over $\mathbb{Q}$ and in order for two $J$-algebras to be isomorphic their composition algebras must be isomorphic.
\end{proof}

\newtheorem{mainthm}[subsubsection]{Theorem}
\begin{mainthm}
Let $t=t(u_1,u_2,v_1,v_2)$ and let $gt \in \Aut(A)$ be an involution of type (I), where $A$ is a split Albert algebra.  Then for $k$-involutions of the form $\theta\circ\I_t = \I_{gt}$,
\begin{enumerate}[$(1)$]
\item if $k=K$ is algebraically closed there is one isomorphism class,
\item if $k=\mathbb{F}_p$ where $\ch(p) >2$ there is one isomorphism class,
\item if $k=\mathbb{R}$ there are $3$ isomorphism classes,
\item if $k=\mathbb{Q}_p$ there are $2$ isomorphism classes,
\item if $k=\mathbb{Q}$ there are an infinite number of isomorphism classes.
\end{enumerate}
\end{mainthm}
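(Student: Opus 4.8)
Proof proposal.

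The plan is to reduce the enumeration of isomorphism classes of the $k$-involutions $\theta\circ\I_t=\I_{gt}$ to the enumeration of isomorphism classes of their fixed subalgebras, and then to read off the answer from Lemma~\ref{mainlem}. Recall from the discussion preceding the statement that $\theta=\I_g$, that $\mathcal{T}_\theta^-=\mathcal{T}$, and that for every $t=t(u_1,u_2,v_1,v_2)\in\mathcal{T}$ one has $(gt)^2=\id$, so $\I_{gt}$ is an order-$2$ automorphism of the split Albert algebra $A$ of type (I); its fixed point set $A^{\I_{gt}}$ is the $(+1)$-eigenspace of $\I_{gt}$, hence the largest subalgebra fixed elementwise by $\I_{gt}$. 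The first step is to identify $A^{\I_{gt}}$ explicitly. Using the quadratic form $\frac12\Tr(x^2)$ and the Jacobson extension theorem, together with Corollary~\ref{corinvjac2}, I would compute $A^{\I_{gt}}$ as a function of $t=t(u_1,u_2,v_1,v_2)$, as foreshadowed in the paragraph before the theorem, and verify that it is $15$-dimensional and isomorphic over $k$ to an algebra $H_3(D,\delta)$ with $D\subset C$ a quaternion subalgebra of the split octonion algebra $C$ and $\delta\in(k^*)^3$ (for $t=\id$ this is the algebra of ``symmetric-type'' matrices, in which $D$ is the split quaternion algebra).

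The second step is the correspondence. I would apply Lemma~\ref{order2fixlem} with $\mathcal{A}=A$, $\mathcal{D}=A^{\I_{gt}}$, $\mathcal{D}'=A^{\I_{gt'}}$: since these are precisely the maximal subalgebras fixed elementwise by the order-$2$ automorphisms $\I_{gt},\I_{gt'}$, the Lemma gives that $\I_{gt}\cong\I_{gt'}$ in $\Aut(A)$ if and only if $A^{\I_{gt}}\cong A^{\I_{gt'}}$ over $k$. Thus the isomorphism classes of type-(I) $k$-involutions inject into the isomorphism classes of subalgebras of $A$ of the form $H_3(D,\gamma)$ with $D\subset C$ quaternion. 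For surjectivity I would use that, since $C$ is split, $H_3(D,\delta)$ embeds in $H_3(C,\id)\cong A$ for every quaternion subalgebra $D\subset C$ and every $\delta\in(k^*)^3$ (the Corollary on subalgebras of a split $J$-algebra), and that any two embeddings of a fixed such algebra differ by an automorphism of $A$ (Jacobson extension theorem); a reflection in such a subalgebra is a type-(I) involution, which by the combinatorial classification of invariants of type (1) in \cite{He00} is isomorphic to some $\theta\circ\I_t$ with $t\in I_k(\mathcal{T}_\theta^-)=\mathcal{T}$. Hence the isomorphism classes of the $\I_{gt}$ are in bijection with the isomorphism classes of subalgebras $H_3(D,\gamma)\subset A$, $D\subset C$ quaternion.

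Granting the bijection, the five assertions follow at once from Lemma~\ref{mainlem}, which counts exactly these subalgebras: one class for $k=K$ algebraically closed; one for $k=\mathbb{F}_p$ with $\ch(k)>2$; three for $k=\mathbb{R}$ (namely $D$ split, $D$ division with $\gamma=\id$, $D$ division with $\gamma=(-1,1,1)$); two for $k=\mathbb{Q}_p$ (namely $D$ split or $D$ division); and infinitely many for $k=\mathbb{Q}$.

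The step I expect to be the main obstacle is the surjectivity part of the second step: one must be certain that the nonsplit fixed algebras — those with $D$ a quaternion division algebra, and over $\mathbb{R}$ additionally the one with $\gamma=(-1,1,1)$ — genuinely occur as $A^{\I_{gt}}$ for $t$ ranging over the $k$-\emph{split} torus $\mathcal{T}$, and not merely as the fixed algebra of some $\Aut(A)$-conjugate type-(I) involution. This reduces to the explicit computation of $A^{\I_{gt}}$ in terms of the data $u_1,u_2,v_1,v_2\in k^*$ and the observation that suitable specializations force the centralizing degree-$3$ subalgebra to be a nonsplit form; alternatively one invokes the general theory of \cite{He00}, in which the $k$-inner elements $I_k(\mathcal{T}_\theta^-)$ are already known to exhaust the type-(I) isomorphism classes, so that the entire count is delivered by Lemma~\ref{mainlem}. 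The rest of the argument is bookkeeping.
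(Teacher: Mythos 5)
Your proposal matches the paper's argument: the proof given there is exactly ``this follows from Lemma~\ref{order2fixlem} and Lemma~\ref{mainlem},'' i.e.\ the correspondence between type-(I) involutions and their fixed subalgebras $H_3(D,\gamma)$ combined with the count of those subalgebras. The surjectivity concern you flag --- that every class $H_3(D,\gamma)$, including the nonsplit ones, is realized as $A^{\I_{gt}}$ for $t$ in the $k$-split torus --- is precisely what the paper settles immediately afterward via the explicit computation of $Q(a)$ and the proposition exhibiting representatives $t(u_1,u_2,v_1,v_2)$ for each class.
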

\begin{proof}
This follows from \ref{order2fixlem} and \ref{mainlem}.
\end{proof}

In fact we can identify a representative of isomorphism classes of $k$-involutions of the type $\theta \circ \I_t$ for each field.  For $k$ algebraically closed or a finite field we can take $u_1=u_2=v_1=v_2=1$.  To find representatives for the isomorphism classes of $k$-involutions when $k$ is $\mathbb{R}, \mathbb{Q}_p$, or $\mathbb{Q}$ it helps to consider the quadratic form from \cite{SV00}.  If we consider an Albert algebra $A$ over $C$, a split composition algebra, then $A \cong H_3(C,\id)$.  Then an element of $H_3(C,\id)$ is of the form,
\[ x =
\begin{bmatrix}
f_1 & x_3 & \bar{x}_2 \\
\bar{x}_3 & f_2 & x_1 \\
x_2 & \bar{x}_1 & f_3
\end{bmatrix},
\]
where $f_i \in k$ and $x_l \in C$, and $\bar{ \ }$ is the algebra involution in $C$.  The quadratic form described in \cite{SV00} is of the form
\[ Q(x) =  \frac{1}{2}(f_1^2 + f_2^2 + f_3^2) + q(x_1) + q(x_2) + q(x_3), \]
with $q$ the quadratic form on $C$.  We consider the same quadratic form on the subalgebra $H_3(D,\gamma) \subset H_3(C,\id)$.  An element of $H_3(D,\gamma)$ has the form
\[ y=
\begin{bmatrix}
f_1 & y_3 & \gamma_1^{-1}\gamma_3\bar{y}_2 \\
\gamma_2^{-1}\gamma_1\bar{y}_3 & f_2 & y_1 \\
y_2 & \gamma_3^{-1}\gamma_2\bar{y}_1 & f_3,
\end{bmatrix}
\]
where $f_i \in k$, $y_l \in D$ a quaternion subalgebra of $C$, and $(\gamma_1,\gamma_2, \gamma_3) \in (k^*)^3$.  The quadratic form restricted to this subalgebra looks like,
\[ Q(y) = \frac{1}{2}(f_1^2 + f_2^2 +f_3^2) + \gamma_3^{-1}\gamma_2 q_D(y_1) + \gamma^{-1}\gamma_3 q_D(y_2) + \gamma_2^{-1}\gamma_1 q_D(y_3), \]
with $q_D$ the quadratic form on $C$ restricted to a quaternion subalgebra $D \subset C$.  From here we need two facts about equivalent quadratic forms.  We know that $A^{\theta} \cong H_3(D,\gamma)$ for some $D$ and $\gamma$.  So, if we compute $Q(a)$, for $a \in A^{\theta}$,
\begin{align*}
Q(a)= = &\frac{1}{2}(a_{01}^2 + a_{05}^2 + a_{09}^2 ) \\
&+ u_1^{-2}u_2a_{02}^2 + u_2 v_1 a_{17}^2 + u_2v_1^{-1}v_2a_{18}^2 + u_2v_2^{-1} a_{19}^2 \\
&+ u_1^{-1}u_2^{-1}a_{03}^2 + u_1u_2^{-1} v_1 a_{14}^2 + u_1u_2^{-1} v_1^{-1} v_2 a_{15}^2 + u_1u_2^{-1}v_2^{-1} a_{16}^2 \\
&+ u_1u_2^{-2}a_{06}^2 + u_1^{-1} v_1 a_{11}^2 + u_1^{-1}v_1^{-1}v_2a_{12}^2 + u_1^{-1}v_2^{-1} a_{13}^2
\end{align*}

\newtheorem{compquad}[subsubsection]{Proposition, \cite{Ja58}}
\begin{compquad}
A quaternion algebra is completely determined by its quadratic form.
\end{compquad}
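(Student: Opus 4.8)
The plan is to prove the sharper two-sided statement: quaternion $k$-algebras $D$ and $D'$ (with $\ch(k)\neq 2$) are isomorphic as $k$-algebras if and only if their norm forms $q_D$ and $q_{D'}$ are isometric, where $q_D(x)=x\bar x$ for the conjugation $x\mapsto\bar x$; for $D$ with standard generators $i,j$ ($i^2=\zeta$, $j^2=\eta$, $ij=-ji$) this norm form is the $2$-Pfister form $\left(\frac{\zeta,\eta}{k}\right)$ in the notation of Section~2. The forward implication is immediate: the reduced trace $t(x)=x+\bar x$, hence $\bar x=t(x)-x$ and $q_D(x)=x\bar x$, are the coefficients of the reduced characteristic polynomial of $x$ and so depend only on the algebra structure; thus every $k$-algebra isomorphism $D\to D'$ is automatically an isometry of norm forms. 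The work is in the converse.

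Assume $q_D\cong q_{D'}$. I would first record two structural facts. Let $D^0=\{x\in D:\bar x=-x\}$ be the space of pure quaternions; since $\langle 1,x\rangle=t(x)=0$ for $x\in D^0$, we get $q_D\cong\langle 1\rangle\perp q_D^0$ with $q_D^0=q_D|_{D^0}$ nondegenerate of rank $3$, and Witt cancellation turns $q_D\cong q_{D'}$ into $q_D^0\cong q_{D'}^0$. Also, the relation $x^2-t(x)x+q_D(x)=0$ gives $x^2=-q_D(x)\cdot 1\in k$ for every $x\in D^0$. Next, dispose of the split case: if $q_D$ is isotropic then $D$ is not a division algebra, so $D\cong\Mat_2(k)$, the unique split quaternion $k$-algebra, and the same holds for $D'$ since $q_{D'}\cong q_D$ is then isotropic too; hence $D\cong\Mat_2(k)\cong D'$.

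So assume $q_D$ (equivalently $q_{D'}$) is anisotropic, so $D$ and $D'$ are division algebras. The idea is to transplant a standard basis of $D$ into $D'$ one generator at a time, guided only by the isometry class of $q_D^0$. Pick $i\in D^0$ with $q_D(i)=-\zeta\neq 0$ and, inside the rank-$2$ anisotropic space $i^\perp\cap D^0$, pick $j$ with $q_D(j)=-\eta$; then $i^2=\zeta$, $j^2=\eta$, the orthogonality $i\perp j$ for the polar form forces $ij=-ji$, and $1,i,j,ij$ is a $k$-basis of $D$ with $q_D^0=\langle-\zeta,-\eta,\zeta\eta\rangle$. Since $q_{D'}^0\cong q_D^0$ represents $-\zeta$, pick $i'\in (D')^0$ with $(i')^2=\zeta$; splitting $q_{D'}^0$ orthogonally off $i'$ and using Witt cancellation again identifies $q_{D'}$ restricted to $(i')^\perp\cap (D')^0$ with $\langle-\eta,\zeta\eta\rangle$, which represents $-\eta$, so pick $j'$ there with $(j')^2=\eta$; then $i'j'=-j'i'$. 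A routine linear-independence argument --- which genuinely uses that $D'$ is a division algebra --- shows $1,i',j',i'j'$ is a $k$-basis of $D'$, and then $i\mapsto i'$, $j\mapsto j'$ extends to a $k$-algebra isomorphism $D\to D'$.

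The main obstacle is exactly this last step: one must show that two algebras sharing only the abstract isometry type of $q_D^0$ possess matching quaternion generators, and that the induced linear bijection is then forced to respect multiplication. The two ingredients that make this work are the repeated use of Witt cancellation (to move from ``the forms agree'' to ``the forms agree after peeling off one diagonal entry'') and the division-algebra hypothesis (to keep the extracted elements linearly independent instead of collapsing). The characteristic-$2$ analogue requires the formalism of quadratic pairs and is not needed here; see \cite{AS76}.
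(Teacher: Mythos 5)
Your proof is correct, but it is worth noting that the paper does not actually prove this proposition at all: it is stated as a citation to Jacobson \cite{Ja58}, where the result appears as part of the general theorem that a composition algebra is determined up to isomorphism by its norm form. What you have supplied is the standard self-contained argument for the quaternion case: reduce to the pure part $q_D^0$ by splitting off $\langle 1\rangle$ and cancelling, dispose of the split case via ``isotropic norm $\Leftrightarrow$ zero divisors $\Leftrightarrow$ $D\cong\Mat_2(k)$,'' and in the anisotropic case transplant a standard generating pair $i,j$ into $D'$ using Witt cancellation and the representation properties of $\langle-\zeta,-\eta,\zeta\eta\rangle$ (this is essentially the argument in \cite{GS06}, Prop.\ 1.2.5, or in Lam \cite{La05}). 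The only quibble is your parenthetical that the linear independence of $1,i',j',i'j'$ ``genuinely uses that $D'$ is a division algebra'': it does not --- these four vectors are pairwise orthogonal and anisotropic for the nondegenerate norm form, which already forces independence --- though of course the division hypothesis is harmless since the split case has been handled separately. Your restriction to $\ch(k)\neq 2$ matches the paper's standing assumption, so the omission of the quadratic-pair formalism is appropriate.
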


We can use this along with the fact that the quadratic form of a quaternion algebra is completely determined by a $2$-Pfister form,
\[ \left( \frac{\zeta, \eta}{k} \right), \]
where $\zeta$ and $\eta$ are the negative squares of two basis vectors in $e^{\perp} \subset D$, where $e$ is the identity element in $D$.  It is known that
\begin{equation}
\label{qquadeq}
\left( \frac{\zeta, \eta}{k} \right) \cong  \left( \frac{m^2\zeta, n^2\eta}{k} \right), 
\end{equation}
see \cite{GS06} 1.1.2, where $m,n \in k^*$.  It is also helpful to note here that 
\begin{equation}
\label{gammaeq}
(\delta \gamma_1, \delta \gamma_2, \delta\gamma_3) \sim (\gamma_1,\gamma_2\gamma_3) \sim (\delta_1^2 \gamma_1, \delta_2^2 \gamma_2, \delta_2^2\gamma_3),
\end{equation}
where $\delta, \delta_i \in k^*$, \cite{Ja68}.  Using \ref{qquadeq} and \ref{gammaeq} we can rewrite,
\begin{align*}
Q(a)=  &\frac{1}{2}(a_{01}^2 + a_{05}^2 + a_{09}^2 ) \\
&+ u_2 \left(a_{02}^2 + v_1 a_{17}^2 + v_1^{-1}v_2a_{18}^2 + v_2 a_{19}^2\right) \\
&+ u_1^{-1}u_2^{-1}\left(a_{03}^2 + v_1 a_{14}^2 +  v_1^{-1} v_2 a_{15}^2 + v_2 a_{16}^2\right) \\
&+ u_1\left(a_{06}^2 + v_1 a_{11}^2 + v_1^{-1}v_2a_{12}^2 + v_2 a_{13}^2 \right),
\end{align*}
making the identifications $u_2 \mapsto \gamma_3^{-1}\gamma_2$, $u_1^{-1}u_2^{-1} \mapsto \gamma_1^{-1}\gamma_3$, $u_1 \mapsto \gamma_2^{-1}\gamma_1$, $\zeta \mapsto v_1$, and $v_1^{-1}v_2 \mapsto \eta$ we have equivalent quadratic forms.

\newtheorem{u1u2v1v2}[subsubsection]{Proposition}
\begin{u1u2v1v2}
For the following fields, $k$, we can take as representatives of isomorphism classes of $k$-involutions of $\Aut(A)$ to be of the form $\I_{gt}$ where $g$ is defined above, and $t=t(u_1,u_2,v_1,v_2)$
\begin{enumerate}[$(1)$]
\item $k=K$ or $k=\mathbb{F}_p$ where $p>2$, $t=t(1,1,1,1)$ is a representative of the only isomorphism class,
\item $k=\mathbb{R}$ for $D$ split we can choose $t(1,1,-1,1)$, for the positive definite case we can choose $t(1,1,1,1)$, and for the indeterminate quadratic form we can choose $t(-1,1,1,1)$,
\item $k=\mathbb{Q}_2$ we can choose $t(1,1,-1,1)$ for the split case and $t(1,1,1,1)$ for $D$ a division algebra,
\item $k=\mathbb{Q}_p$ with $p>2$ we can choose $t(1,1,-1,1)$ for $D$ split, and $t(1,1,-p,-Z_p)$ for $D$ a division algebra.
\end{enumerate}
\end{u1u2v1v2}
\begin{proof}
(1) is straight forward.  (2) is well known, but this can be seen through straight forward computations and the fact that $(1,1,1) \not\sim (-1,1,1)$. Recall that there are only two isomorphism classes of quaternion algebras.  (3) follows from straight forward computations and the fact that there are only two isomorphism classes of quaternion algebras over $k=\mathbb{Q}_2$ is determined by $\left(\frac{-1,-1}{\mathbb{Q}_2}\right)$ a division algebra, and a quaternion algebra determined by $\left(\frac{1,-1}{\mathbb{Q}_2}\right)$ for the split case.  (4)  can be seen if we let $\mathbb{Q}_p^*/(\mathbb{Q}_p^*)^2 = \{1, p, Z_p, pZ_p\}$, where $Z_p$ is the smallest non-square in $\mathbb{F}_p$.  There are two isomorphism classes of quaternion algebras over $\mathbb{Q}_p$ when $p>2$; one when the quaternion algebra is determined by $\left(\frac{1,-1}{\mathbb{Q}_p}\right)$ for the split case, and the other is determined by $\left(\frac{p,Z_p}{\mathbb{Q}_p}\right)$ for the division algebra case.
\end{proof}

For $k=\mathbb{Q}$ we have seen that there are an infinite number of isomorphism classes of $k$-involutions of $\Aut(C)$ where $C$ is an octonion algebra, \cite{Hu12}.  This is due to the fact that there are an infinite number of isomorphism classes of quaternion division algebras of $\mathbb{Q}$.  We recall that 
\[ \left(\frac{-1,p}{\mathbb{Q}} \right) \not\cong \left(\frac{-1,q}{\mathbb{Q}} \right), \]
when $p$ and $q$ are distinct primes both equivalent to $3 \mod 4$.  This alone is enough to give us subalgebras of $H(C,\id)$ of the form $H_3(D_i,\id)$ where $D_i$ is the quaternion algebra with the quadratic form $\left(\frac{-1,p_i}{\mathbb{Q}} \right)$ where $p_i$ are all distinct primes equivalent to $3 \mod 4$.

\subsection{A decomposition}

The $k$-involutions of the form $\theta \circ \I_t = \I_s$ as defined above, correspond to an element of order $2$ in $\Aut(A)$ fixing a subalgebra of the form $H_3(D,\gamma)$, where $D$ is a quaternion subalgebra defined over $k$, and $\gamma$ is a diagonal matrix with entries in $k^*$. 

This induces a decomposition of the Albert algebra $A$ into a Jordan algebra over the quaternion algebra fixed by an element of order $2$ in $\Aut(C)$ and a quaternion multiple of the skew symmetric matrices taken over a quaternion algebra of the same type.

Let $t \in \Aut(A)$ be of order $2$ such that $t|_C = \hat{t} \in \Aut(C)$ is of order $2$, then $s$ is of the form,
\[ t(f_1,f_2f_3,c_1,c_2,c_3) = (f_1,f_2,f_3,\hat{t}(c_1),\hat{t}(c_2),\hat{t}(c_3)), \]
and fixes a quaternion subalgebra $D \subset C$ that is either split or a division algebra.  Then the map $s$ fixes a subalgebra of the form $H_3(D,\gamma) \subset H_3(C,\id) \cong A$.  So the algebra $H_3(C,\id)$ decomposes as follows
\[ H_3(D,\gamma) \oplus \Skew_3(D, \gamma)\cdot j, \]
where we think of $j=\diag(j,j,j)$ with $j \in D^{\perp}$, so that $C=D\oplus Dj$.  An element of $H_3(C,\id)$ can be written in the form
\[ X + Y \cdot j, \]
where $X \in H_3(D,\gamma)$ and $Y \in \Skew_3(D,\gamma)$.  Then we can look at the elements $t \in \Aut(A)$ that fix a subalgebra isomorphic to $H_3(D,\gamma)$
\[ t(X + Y\cdot j) = t(X) + t(Y\cdot j), \]
since $t$ leaves $H_3(D,\gamma)$ invariant and is an automorphism, it must leave $\Skew_3(D,\gamma) = H_3(D,\gamma)^{\perp}$ invariant as well.  This means we can think of $t \in \Aut(A)$ in terms of its action on the subalgebra and its perpendicular complement.  We will rename the map
\[ t|_{H_3(D,\gamma)} := r, \]
and the map $t(Y\cdot j) = s(Y) \cdot j$, which allows us to write,
\[ t(X + Y\cdot j) = r(X) + s(Y) \cdot j. \]

\newtheorem{decompmult1}[subsubsection]{Propsition}
\begin{decompmult1}
For $X \in H_3(D,\gamma)$ and $Y,V \in \Skew_3(D,\gamma)$ the following are true,
\begin{enumerate}[\hspace{1cm}$1.$]
\item $X(Y \cdot j) \in \Skew_3(D,\gamma)$,
\item $(Y\cdot j)(V\cdot j)  \in H_3(D,\gamma)$.
\end{enumerate}
\end{decompmult1}
\begin{proof}
This can be shown easily through straight forward computation.
\end{proof}

We denote the products defined above by
\[  (X \bullet Y)\cdot j := X(Y \cdot j), \text{ and } Y*V:= (Y\cdot j)(V\cdot j). \]

Using the above notation we can then say that
\begin{align*}
t\left((X + Y\cdot j)(U+V\cdot j) \right) &= t \left( XU + Y*V + (X\bullet V + U \bullet Y)\cdot j \right)  \\
&= r(XU + Y*V) + s (X\bullet V + U \bullet Y)\cdot j \\
&= r(XU) + r(Y*V) + \left( s(X \bullet V) + s(U \bullet Y) \right) \cdot j 
\end{align*}
If we now use the fact that $t \in \Aut(A)$, we can say
\begin{align*}
t(X + Y\cdot j)s(U+V\cdot j) &= \left( r(X) + s(Y)\cdot j \right) \left( r(U) + s(V) \cdot j \right) \\
&= r(X)r(U) + s(Y)*s(V) + \left( r(X) \bullet s(V) + r(U) * s(Y) \right) \cdot j .
\end{align*}
Setting $t\left((X + Y\cdot j)(U+V\cdot j) \right) = t(X + Y\cdot j)t(U+V\cdot j)$, we see the following
\begin{align}
 r(XU) &= r(X)r(U)  \label{autDeq1} \\
r(Y*V) &= s(Y)*s(V) \label{autDeq2} \\
s(X\bullet V) &= r(X) \bullet t(V). \label{autDeq3} 
\end{align}

 Define an \emph{algebra involution} to be a map on a $k$-algebra $\mathcal{A}$ such that
\begin{enumerate}[\hspace{1cm}1.]
\item $\iota(x + y) = \iota(x) + \iota(y)$,
\item $\iota(xy) = \iota(y)\iota(x)$,
\item $\iota^2(x) = x$,
\end{enumerate}
for all $x,y \in \mathcal{A}$.  We will be concerned with the case where the $k$-algebra $\Mat_3(D)$, and $\iota = \iota_{\gamma} :\Mat_3(D) \to \Mat_3(D)$ is induced by $\gamma = \diag(\gamma_1,\gamma_2,\gamma_3)$ with $\gamma_i \in k^*$, by $\iota(x) = \gamma^{-1} \bar{x}^T \gamma$.   

If we look at a typical element of the space $\Skew_3(D,\gamma) * \Skew_3(D,\gamma) \subset H_3(D,\gamma)$ we need to consider the product $(Y\cdot j)(V \cdot j) = Y * V$, where $Y, V \in \Skew_3(D,\gamma)$, $j \in D^{\perp}$.  We have an element of the following form 
\begin{equation}
\label{skewskew}
 Y*V = \frac{q(j)}{2} \left( \gamma^{-1} \left( \iota(V) \cdot Y + \iota(Y) \cdot V  \right)^T \gamma \right). 
\end{equation}

Now we consider the space $H_3(D,\gamma)) \bullet \Skew_3(D,\gamma) \subset \Skew_3(D,\gamma)$, and look at elements of the form $X\bullet V$ where $X \in H_3(D,\gamma)$ and $V \in \Skew_3(D,\gamma)$.  For this we will consider elements $X \in H_3(D,\gamma)$, i.e.
\[ X =
\begin{bmatrix}
f_1 & x_3 & \gamma_1^{-1}\gamma_3 \bar{x}_2 \\
\gamma_2^{-1}\gamma_1 \bar{x}_3 & f_2 & x_1 \\
x_2 & \gamma_3^{-1}\gamma_2 \bar{x}_1 & f_3 
\end{bmatrix}.
\]
We can look at the product $X \bullet V$, where $X(V\cdot j) = (X \bullet V) \cdot j$, and we arrive at
$X\bullet V =  \left(V \cdot \overline{X} + (V^T \cdot X^T )^T \right)$.

\subsection{Fixed point groups}

Over any field there is only one isomorphism class of $k$-involutions, which has as a representative conjugation by an element in $\Aut(A)$ fixing an $11$  dimensional subalgebra, so there is only one isomorphism class of their fixed point groups by \cite{He00}.

We will take $\sigma$ as defined above as our representative of this class of $k$-involutions.  We will call the $11$ dimensional subalgebra $B \subset A$, and so the subgroup of $G=\Aut(A)$ that leaves $B$ invariant is $G^{\sigma}$.

\newtheorem{spin}[subsubsection]{Proposition}
\begin{spin}
The fixed point group of $\sigma$ is isomorphic to $\Spin(Q, E_0)$.
\end{spin}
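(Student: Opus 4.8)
The plan is to identify the fixed point group $G^{\sigma}$ with the stabiliser in $G=\Aut(A)$ of the primitive idempotent $w$ to which $\sigma$ is attached, then to send that stabiliser into $\Orth(Q,E_0)$ by restriction, and finally to show that the resulting homomorphism is a central double cover of $\SO(Q,E_0)$. To begin, write $\sigma=\I_{r_w}$, where $r_w$ is the type $(II)$ involution acting as $+1$ on $B=kw\oplus k(e-w)\oplus E_0$ (the $11$-dimensional subalgebra $B$ of the statement) and as $-1$ on $E_1$; that $r_w$ is an automorphism is immediate from the Peirce rules $(kw\oplus k(e-w))E_1\subseteq E_1$, $E_0E_1\subseteq E_1$, $E_1E_1\subseteq B$. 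As observed just before the statement, $G^{\sigma}=\{\phi\in\Aut(A)\mid\phi(B)=B\}$; such a $\phi$ restricts to an automorphism of $B$, and since $B=kw\times J_0$ with $J_0:=k(e-w)\oplus E_0$ a simple Jordan algebra of dimension $10$ (a reduced spin factor) while $kw$ is simple of dimension $1$, any automorphism of $B$ fixes each of these two non-isomorphic simple ideals; hence $\phi(kw)=kw$ and therefore $\phi(w)=w$. Conversely $\phi(w)=w$ forces $\phi$ to preserve every Peirce space of $w$, hence $\phi(B)=B$. Thus $G^{\sigma}=\Aut(A)_w=Z_G(r_w)$, which is connected since $\F_4$ is simply connected and $\ch(k)\neq 2$.

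Next I would analyse the restriction morphism $\rho\colon\Aut(A)_w\to\Orth(Q,E_0)$, well defined because $\phi(w)=w$ gives $\phi(E_0)=E_0$ and $\phi$ preserves $Q$; its image lies in $\SO(Q,E_0)$ by connectedness. To compute $\ker\rho$, take $\phi$ fixing $E_0$ (equivalently $B$) pointwise. For $a\in E_0$ and $x\in E_1$ the relation $\phi(ax)=\phi(a)\phi(x)=a\,\phi(x)$ shows that $\phi|_{E_1}$ commutes with every left multiplication $L_a$, $a\in E_0$; identifying $E_1$ with the irreducible spinor module of the quadratic space $(E_0,Q|_{E_0})$ — equivalently, the fact that the operators $L_aL_b$ with $a,b\in E_0$ span $\operatorname{End}(E_1)$ — then forces $\phi|_{E_1}=\lambda\,\id$ for a scalar $\lambda$, and since $\phi$ is an isometry of $Q$ and $Q$ is nondegenerate on $E_1$ we get $\lambda^2=1$. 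Hence $\ker\rho=\{\id,r_w\}$ has order $2$.

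For surjectivity I would use the Lie algebra: for $a,b\in E_0$ the inner derivation $[L_a,L_b]$ of $A$ annihilates $w$ (because $L_aw=L_bw=0$), so it lies in the Lie algebra of $\Aut(A)_w$, and its restriction to $E_0$ is the corresponding inner derivation of the spin factor $J_0$; these restrictions span $\mathfrak{so}(Q,E_0)$, so $d\rho$ is surjective and therefore, $\SO(Q,E_0)$ being connected, $\rho$ is surjective. Alternatively one may count dimensions: $\dim\F_4=52$ and the $(-1)$-eigenspace of $d\sigma$ on the Lie algebra of $\F_4$ has dimension $16$, so $\dim\Aut(A)_w=36=\dim\SO(Q,E_0)$. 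In either case $\rho$ is a surjective central isogeny with kernel of order $2$; the unique such cover of $\SO(Q,E_0)$ is $\Spin(Q,E_0)$, so $G^{\sigma}=\Aut(A)_w\cong\Spin(Q,E_0)$.

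The step I expect to be the genuine obstacle is the kernel computation in the second paragraph, since it rests on recognising $E_1$, via multiplication by $E_0$, as the irreducible spinor module of $(E_0,Q|_{E_0})$; if one would rather not reprove this, both that module identification and the resulting isomorphism $\Aut(A)_w\cong\Spin(Q|_{E_0})$ can instead be quoted from the analysis of primitive idempotents and the group $\Spin_9$ in \cite{SV00} (or \cite{Ja68}).
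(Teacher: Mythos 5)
Your argument is correct and follows essentially the same route as the paper: both identify the fixed point group $G^{\sigma}$ with the stabilizer $\Aut(A)_w$ of the primitive idempotent $w$ and then use the isomorphism $\Aut(A)_w\cong\Spin(Q,E_0)$. The only difference is one of detail --- the paper simply quotes that isomorphism from the structure theory of primitive idempotents (the $\Spin_9$ analysis in \cite{SV00}, \cite{Ja68}), exactly as you suggest doing in your closing paragraph, whereas you reprove it via the restriction map to $\SO(Q,E_0)$, the identification of $E_1$ as the spinor module of $(E_0,Q)$, and the central-isogeny argument, and you also make explicit the step $G^{\sigma}=\Aut(A)_w$ that the paper leaves implicit.
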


\begin{proof}
The subgroup $\Aut(A)_w \subset \Aut(A)$ that leaves a primitive idempotent $w$ invariant is isomorphic to $\Spin(Q,E_0)$ where $Q$ is the quadratic trace form on $A$ and $E_0$ is the $0$-space of multiplication by $w$.  The algebra $E_0$ is isomorphic to the $11$ dimensional algebra fixed by $\sigma$.
\end{proof}

\newtheorem{autm}[subsubsection]{Proposition}
\begin{autm}
The fixed point group of $\theta \circ \I_t$ is isomorphic to \\ $\Aut(\Mat_3(D),\iota) \times \Sp(1)$ where $\iota$ is an algebra involution on $\Mat_3(D)$. 
\end{autm}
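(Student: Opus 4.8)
The plan is to exhibit $G^{\theta\circ\I_t}$ as an extension of $\Aut(\Mat_3(D),\iota)$ by $\Sp(1)$ and then split it, using the decomposition $A\cong H_3(D,\gamma)\oplus\Skew_3(D,\gamma)\cdot j$ constructed above. First I would note that every element of $\Aut(A)$ preserves the norm $Q$, and that the order‑two element $gt\in\Aut(A)$ inducing $\theta\circ\I_t=\I_{gt}$ has fixed algebra $H_3(D,\gamma)$ and $(-1)$‑eigenspace $H_3(D,\gamma)^{\perp}=\Skew_3(D,\gamma)\cdot j$. Hence $\phi\in\Aut(A)$ centralizes $gt$ — i.e. $\phi\in G^{\theta\circ\I_t}$ — if and only if $\phi$ stabilizes the subalgebra $H_3(D,\gamma)$ (stabilizing its orthogonal complement is then automatic), equivalently $\phi(X+Y\cdot j)=r(X)+s(Y)\cdot j$ with $r\in\Aut(H_3(D,\gamma))$ and $s$ a linear map of $\Skew_3(D,\gamma)$ subject to \ref{autDeq1}--\ref{autDeq3}.

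Next I would introduce the restriction homomorphism $\rho\colon G^{\theta\circ\I_t}\to\Aut(H_3(D,\gamma))$, $\phi\mapsto r=\phi|_{H_3(D,\gamma)}$. Since $H_3(D,\gamma)$ is the Jordan algebra of $\iota$‑Hermitian matrices in $\Mat_3(D)$, we have $\Aut(H_3(D,\gamma))\cong\Aut(\Mat_3(D),\iota)$, the automorphisms of that associative algebra with involution, which is the first factor in the statement. Surjectivity of $\rho$ follows from the extension theorem of Jacobson stated above: an automorphism $r$ of the reduced simple degree‑$3$ subalgebra $H_3(D,\gamma)$ of the reduced simple exceptional $A$ extends to some $\phi\in\Aut(A)$, and any such $\phi$ sends $H_3(D,\gamma)$ onto itself, hence lies in $G^{\theta\circ\I_t}$.

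Then I would identify $\ker\rho$, the automorphisms of $A$ fixing $H_3(D,\gamma)$ pointwise. With $r=\id$, relations \ref{autDeq2}--\ref{autDeq3} say that $s$ preserves the product $*$ of \ref{skewskew} and commutes with $\bullet$‑multiplication by all of $H_3(D,\gamma)$; since the diagonal matrix units lie in $H_3(D,\gamma)$ and are fixed, $\phi$ preserves each off‑diagonal Peirce space $A_{12},A_{13},A_{23}\cong C$ and acts there as an orthogonal map fixing $D$ pointwise, so $\phi$ is induced entrywise by an octonion automorphism fixing $D$ pointwise. Thus $\ker\rho\cong\Aut(C/D)$, the pointwise stabilizer of $D$ in $\Aut(C)$, and $\Aut(C/D)\cong\SL_1(D)=\Sp(1)$, acting on $C=D\oplus Dj$ by $a+\ell j\mapsto a+(p\ell)j$ with $q_D(p)=1$ (over the algebraic closure this is $\SL_2$, consistent with the expected type $C_3\times A_1$ and $\dim G^{\theta\circ\I_t}=21+3=24$). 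To finish I would produce a section of $\rho$: for $\bar r\in\Aut(\Mat_3(D),\iota)$ one checks, using \ref{skewskew} and the formula $X\bullet V=V\cdot\overline{X}+(V^{T}\cdot X^{T})^{T}$, that $X+Y\cdot j\mapsto\bar r(X)+\bar r(Y)\cdot j$ satisfies \ref{autDeq1}--\ref{autDeq3} and is therefore an automorphism of $A$, and that such maps commute with $\ker\rho=\Sp(1)$ because the two actions are supported on the independent matrix and octonion‑doubling structures; this yields $G^{\theta\circ\I_t}\cong\Aut(\Mat_3(D),\iota)\times\Sp(1)$.

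I expect the last step to be the main obstacle. One must verify that $\bar r$ genuinely respects the $D$‑conjugation implicit in \ref{skewskew}, and that left multiplication on $Dj$ by norm‑one quaternions commutes with the induced action of $\bar r$ on $\Skew_3(D,\gamma)$. Some care is also needed with the element $\psi_{-1}\in\Sp(1)$ (the reflection through $D$), which induces $gt$ itself and is therefore central in $G^{\theta\circ\I_t}$; one should track how the chosen section meets $\ker\rho$ so that the product is formed correctly. Everything else — the eigenspace description of $G^{\theta\circ\I_t}$, surjectivity of $\rho$ via Jacobson's extension theorem, and the identification $\ker\rho\cong\Sp(1)$ — is routine given the groundwork already laid.
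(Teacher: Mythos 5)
Your overall strategy --- decompose $A$ as $H_3(D,\gamma)\oplus\Skew_3(D,\gamma)\cdot j$, restrict elements of the centralizer to the Hermitian part, use Martindale and Jacobson's extension theorem to produce the $\Aut(\Mat_3(D),\iota)$ factor, and extract the remaining $\Sp(1)$ from the octonion doubling $C=D\oplus Dj$ --- is the same decomposition and the same pair of extension theorems the paper relies on; the difference is that you package the argument as a short exact sequence with an explicit splitting, whereas the paper writes $s=\tilde p\,r$ with $p\in\Sp(1)$ by restricting to $C$ and then pins down the second factor by a rank count. Your version would be sharper if completed, but it places the entire weight of the proposition on the step you yourself defer: the claim that $X+Y\cdot j\mapsto\bar r(X)+\bar r(Y)\cdot j$ lies in $\Aut(A)$ for every $\bar r\in\Aut(\Mat_3(D),\iota)$, and that these maps commute with the kernel.

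That step is a genuine gap, not a routine verification. Writing $\bar r=\I_u$ with $u\in\GL_3(D)$ satisfying $\iota(u)u\in k^*$, condition \ref{autDeq2} tested against formula \ref{skewskew} amounts to $u\,\gamma^{-1}W^{T}\gamma\,u^{-1}=\gamma^{-1}\left(uWu^{-1}\right)^{T}\gamma$ for $W=\iota(V)\cdot Y+\iota(Y)\cdot V$, i.e.\ to conjugation by $u$ commuting with the quaternionic conjugate--transpose built into $*$; this fails for general $u$, since entrywise quaternion conjugation is not an automorphism of $\Mat_3(D)$. What is actually available is that each $r$ extends to \emph{some} element of the fixed point group, with the companion map $s$ determined only up to the $\Sp(1)$-action on the $j$-part --- a torsor rather than a canonical choice. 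Whether those choices assemble into a homomorphic section meeting the kernel trivially and commuting with it is precisely the question of whether the group is the direct product $\Aut(\Mat_3(D),\iota)\times\Sp(1)$ or a central product of the form $(\Sp(3)\times\Sp(1))/\{(1,1),(-1,-1)\}$, as in the classical compact picture; your argument asserts the former without ruling out the latter, and the element $\psi_{-1}=gt$ you flag is exactly where the two possibilities diverge. (The paper's rank-and-type argument has the same blind spot, since rank and Dynkin type only determine the group up to isogeny.) To close the gap you would need either an explicit verification of \ref{autDeq1}--\ref{autDeq3} for a generating set of $\Aut(\Mat_3(D),\iota)$ with a coherent choice of $s$, or an argument locating $gt$ relative to the image of your section.
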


\begin{proof}
Let $r \in \Aut(H_3(D,\gamma))$ for $D \subset C$ a quaternion subalgebra of $C$ and $\gamma \in (k^*)^3$.   By \cite{Ma67}, $r$ extends uniquely to an element $\tilde{r} \in \Aut(\Mat_3(D),\iota)$, where $\iota = \iota_{\gamma}$ is the algebra involution in $A$ induced by $\gamma$.  If $t \in \Aut(H_3(C,\id))$ such that $t^2 = \id$ and $t$ leaves elementwise fixed a subalgebra of the form $H(D,\gamma)$.  If $X \in H_3(D,\gamma) \subset \Mat_3(D)$ and $Y \in \Skew_3(D,\gamma) \subset \Mat_3(D)$, then $t(X+Y\cdot j) = r(X) + s(Y) \cdot j$ where $j\in D^{\perp}$ with $q(j) \neq 0$, $r \in \Aut(H_3(D,\gamma))$ and $s \in L(\Skew_3(D,\gamma))$.  Let $x,y \in D$ then $t|_C (x+yj) = r|_D(x) + s|_D(y)j$ such that $r|_D \in \Aut(D)$ and $s|_D = p (r|_D)$, where $p \in \Sp(1)$, \cite{Hu12}.  So $s|_D \in \Aut(D) \times \Sp(1)$, and we have $s = \tilde{p}r \in \Aut(\Mat_3(D), \iota) \times H$ and $\det(\tilde{p})=1$.  But we have $\Aut(\Mat_3(D),\iota) \times H \subset \Aut(H_3(C,\id))$, which has rank $4$. The subgroup $\Aut(H_3(D,\gamma)) \cong \Aut(\Mat_3(D),\iota)$ is of type $\C_3$, $H \supset \Sp(1)$ has rank $1$, and so $H \cong \Sp(1)$.
\end{proof}

These groups correspond to a description of coordinates given by Kac in \cite{Ka81}, and what Serre calls Kac coordinates in \cite{Se06}.  We now provide a summary of the idea of Kac coordinates from \cite{Se06} starting with the theory for $k$ having characteristic zero.  These are also mentioned in \cite{Le12}.  We fix a maximal torus $T$, and a set of roots $\Phi(T)$ with base $(\alpha_i)_{i \in I}$, with $\alpha_i \in X^*(T) \otimes_{\mathbb{Z}} \mathbb{Q}$.  We denote by
\[ \tilde{\alpha} = \displaystyle\sum_{i\in I} \lambda_i\alpha_i, \]
the longest root.  The coefficients $\lambda_i \in \mathbb{Z}$ and $\lambda_i\geq 1$.  If we then take $I_0 = I \cup \{0\}$, and set $\alpha_0 = - \tilde{\alpha}$ we have
\[ \displaystyle\sum_{i \in I_0} \lambda_i \alpha_i = 0. \]
We can associate the set $I$ to the set of vertices of the Dynkin diagram of $\Phi(T)$, and $I_0$ to the set of vertices of the extended Dynkin diagram.  When $\ch(k)=0$ we choose a parametrization of the roots of unity.  In general we want a homomorphism $\epsilon:\mathbb{Q} \to K^*$, where $\ker(\epsilon) = \mathbb{Z}$.  For example the natural choice when $K=\mathbb{C}$ is the map $\epsilon(\chi) = \e^{2\pi i \chi}$.

From this we can associate an element $t_{\chi} \in T(k)$ in the following way
\[ \omega(t_\chi) = \epsilon\left( \displaystyle\sum_{i \in I} \xi_i(\omega) \chi_i \right), \]
where $\omega \in X^*(T)$, and $\xi_i(\omega)$ are the coordinates of $\omega$ with respect to $(\alpha_i)$.  When $Z(G)$ contains only the identity this is enough to characterize $t_\chi$, and this is the case for $\Aut(A)$.  We define the set $P$ in the following way,
\[ P = \left\{ \chi=(\chi_i) \ | \ \chi_i \geq 0, \sum \lambda_i \chi_i = 1, i \in I_0 \right\}. \]

\newtheorem{KacCoor}[subsubsection]{Theorem, Kac}
\begin{KacCoor}
Any element of finite order  of $G(k)$ is conjugate to exactly one $t_\chi$, with $\chi \in P$.
\end{KacCoor}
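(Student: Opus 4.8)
The plan is to prove the theorem of Kac on finite-order conjugacy classes by reducing to the classical statement for compact (or complex semisimple) groups and then invoking the machinery of Kac coordinates already set up in the excerpt. First I would reduce to the case that the element $x \in G(k)$ of finite order $m$ generates a cyclic group whose fixed point data we can track: replacing $k$ by an algebraically closed extension does not change conjugacy of semisimple elements in the relevant sense, and since $\ch(k) = 0$ every finite-order element is semisimple, hence contained in a maximal torus. So after conjugating we may assume $x \in T(K)$. The heart of the argument is then to show that the character $\omega \mapsto \omega(x)$, which is a homomorphism $X^*(T) \to \mu_m(K) \subset K^*$, can be written via the parametrization $\epsilon$ as $\omega \mapsto \epsilon\bigl(\sum_i \xi_i(\omega)\chi_i\bigr)$ for a unique $\chi$ in the affine simplex $P$, up to the action of the affine Weyl group.

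The key steps, in order, are as follows. (i) Lift $x$ to an element $t_\chi$ with $\chi \in X^*(T) \otimes_{\mathbb Z} \mathbb Q$: since $\ker(\epsilon) = \mathbb Z$ and $x^m = 1$, the induced map on characters factors through $\tfrac1m \mathbb Z / \mathbb Z$, and choosing rational coordinates $\chi_i$ with denominators dividing $m$ realizes $x = t_\chi$. (ii) Identify the ambiguity in the choice of $\chi$: two vectors $\chi, \chi'$ give $t_\chi = t_{\chi'}$ (as elements of $T(k)$, using that $Z(G)$ is trivial for $G = \Aut(A)$, which is exactly the hypothesis invoked in the excerpt) precisely when $\chi - \chi'$ lies in the coweight lattice paired integrally against $X^*(T)$; combined with the Weyl group action $N_G(T)/T$ acting on $\chi$, this says conjugacy classes of such $x$ correspond to orbits of the affine Weyl group $W_{\mathrm{aff}} = W \ltimes Q^\vee$ on the rational points of $X_*(T) \otimes \mathbb Q$. (iii) Invoke the standard fact that a fundamental domain for the action of $W_{\mathrm{aff}}$ on $X_*(T)\otimes \mathbb R$ is the fundamental alcove, which in the coordinates $(\chi_i)_{i \in I_0}$ dual to the simple affine roots $\alpha_i$ ($i \in I_0$, with $\alpha_0 = -\tilde\alpha$ and relation $\sum_{i\in I_0}\lambda_i\alpha_i = 0$) is exactly the simplex $P = \{\chi_i \geq 0,\ \sum \lambda_i \chi_i = 1\}$. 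Uniqueness then follows because the alcove is a strict fundamental domain (no interior identifications), and the normalization $\sum \lambda_i \chi_i = 1$ is forced by $x$ having the prescribed order together with the relation among the affine roots.

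The main obstacle I expect is step (ii): pinning down exactly which lattice of translations acts, i.e. showing that for $G = \Aut(A)$ adjoint of type $\F_4$ (center trivial, $X^*(T)$ the root lattice, $X_*(T)$ the coweight lattice) the translation lattice entering the affine Weyl group is precisely $Q^\vee$ and not a finer or coarser lattice. This is where the hypothesis "$Z(G)$ contains only the identity'' genuinely enters and must be used carefully — it guarantees that $t_\chi$ is determined by its values on all of $X^*(T)$ (no further quotient), so that $t_\chi = t_{\chi'}$ forces $\chi - \chi' \in X_*(T) = P^\vee$, while the Weyl translations contribute $Q^\vee$; for type $\F_4$ these coincide since the group is both simply connected and adjoint (the fundamental group is trivial), which collapses the potential subtlety. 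Once that identification is in place, everything else is the classical alcove geometry, which I would cite rather than reprove. I would also note that the argument is field-independent in characteristic $0$ because all the torus and Weyl-group data are defined over the prime field, and for $\F_4$ the group is split so $T$ is $k$-split; thus the statement over $k$ follows from the statement over $K$ together with the fact that the relevant alcove representative $t_\chi$ already lies in $T(k)$.
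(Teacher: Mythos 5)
The paper does not actually prove this statement: it is quoted from Kac and Serre, and the ``proof'' in the text is the single sentence ``This is proven in \cite{Ka81, Le12}.'' Your sketch is essentially the argument that those references give --- reduce to a semisimple element of $T(K)$, read off a rational cocharacter-space point via $\epsilon$, identify the ambiguity as the action of $W\ltimes X_*(T)$, and use that the closed fundamental alcove is a strict fundamental domain for the affine Weyl group, with the $\F_4$-specific simplification $P^\vee=Q^\vee$ so that no extended affine Weyl group (diagram automorphism) issues arise. Your identification of $P$ with the rational points of the closed alcove via $\chi_0=1-\tilde\alpha(v)$ and $\lambda_0=1$ is correct, as is the role of $Z(G)=1$ in pinning down $t_\chi$ from its character values. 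So the core of the proposal is sound and matches the intended (cited) argument.

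The one genuine flaw is your closing claim that ``the statement over $k$ follows from the statement over $K$ together with the fact that the relevant alcove representative $t_\chi$ already lies in $T(k)$.'' Conjugacy in $G(K)$ does not descend to conjugacy in $G(k)$: the obstruction lives in $H^1(Gal_k, Z_G(t_\chi))$, and its nonvanishing is precisely the subject of the rest of this paper. Indeed, if the theorem were true with $G(k)$-conjugacy for arbitrary $k$ of characteristic zero, there would be exactly one class of involutions of type (I) over $\mathbb{R}$, contradicting the paper's own count of three (and the count of two over $\mathbb{Q}_p$, infinitely many over $\mathbb{Q}$). The statement must be read as conjugacy over the algebraic closure (equivalently, $k=K$), which is how Kac and Serre state it and how the paper uses it --- namely, to enumerate the two \emph{geometric} classes of order-two elements, whose refinement into $G(k)$-classes is then handled by the $k$-inner elements and the Galois-cohomological discussion. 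If you delete or correct that final paragraph, the proof stands.
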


This is proven in \cite{Ka81, Le12}.  We can simplify the situation when looking for conjugacy classes of element of a fixed order $\kappa$.  Let $\chi_i = \frac{\rho_i}{\kappa}$, and the equation defining $P$ becomes 
\begin{equation}
\label{KacEq}
\displaystyle\sum_{i \in I_0} \lambda_i \rho_i = \kappa. 
\end{equation}

In our case $\Aut(A)$ if of type $\F_4$, and choosing a popular set of roots and base we have that
\[ \tilde{\alpha} = 2\alpha_1 + 3 \alpha_2 + 4 \alpha_3 + 2 \alpha_4. \]
So the elements of order $2$ correspond to the solution $(\rho_i)_{i \in I_0}$ with $\rho_i \in \mathbb{N}$ such that
\[ \rho_0 + 2\rho_1 + 3 \rho_2 + 4 \rho_3 + 2 \rho_4 = 2, \]
and we have the solutions $(0,1,0,0,0)$ and $(0,0,0,0,1)$.  

Also pointed out in \cite{Se06} is that the centralizers of each $t_\chi$ have Dynkin diagrams contained within the affine Dynkin diagram for $G$ whose vertices correspond to the $\rho_i=0$.  In our case we will delete the $\alpha_1$ vertex for one conjugacy class, and the $\alpha_4$ vertex for another conjugacy class.

\begin{center}
\begin{picture}(120,24)(10,10)
\put(0,0){\circle{6}}
\put(25,0){\circle{6}}
\put(50,0){\circle{6}}
\put(75,0){\circle{6}}
\put(100,0){\circle{6}}
\put(0,6){\makebox(0,0)[b]{\scriptsize $\alpha_0$}}
\put(25,6){\makebox(0,0)[b]{\scriptsize $\alpha_1$}}
\put(50,6){\makebox(0,0)[b]{\scriptsize $\alpha_2$}}
\put(75,6){\makebox(0,0)[b]{\scriptsize $\alpha_3$}}
\put(100,6){\makebox(0,0)[b]{\scriptsize $\alpha_4$}}
\put(3,0){\line(1,0){19}}
\put(28,0){\line(1,0){19}}
\put(53,-1){\line(1,0){19}}
\put(53,1){\line(1,0){19}}
\put(59,-2.5){$>$}
\put(78,0){\line(1,0){19}}
\end{picture}
\end{center}

\vspace{1.5cm}

This leaves us with the following two Dynkin diagrams.

\begin{center}
\begin{picture}(120,24)(10,10)
\put(0,0){\circle{6}}
\put(50,0){\circle{6}}
\put(75,0){\circle{6}}
\put(100,0){\circle{6}}
\put(0,6){\makebox(0,0)[b]{\scriptsize $\alpha_0$}}
\put(50,6){\makebox(0,0)[b]{\scriptsize $\alpha_2$}}
\put(75,6){\makebox(0,0)[b]{\scriptsize $\alpha_3$}}
\put(100,6){\makebox(0,0)[b]{\scriptsize $\alpha_4$}}
\put(52.6,-1){\line(1,0){19.4}}
\put(52.6,1){\line(1,0){19.4}}
\put(59,-2.5){$>$}
\put(78,0){\line(1,0){19}}
\end{picture}

\vspace{1cm}

\begin{picture}(120,24)(10,10)
\put(0,0){\circle{6}}
\put(25,0){\circle{6}}
\put(50,0){\circle{6}}
\put(75,0){\circle{6}}
\put(0,6){\makebox(0,0)[b]{\scriptsize $\alpha_0$}}
\put(25,6){\makebox(0,0)[b]{\scriptsize $\alpha_1$}}
\put(50,6){\makebox(0,0)[b]{\scriptsize $\alpha_2$}}
\put(75,6){\makebox(0,0)[b]{\scriptsize $\alpha_3$}}
\put(3,0){\line(1,0){19}}
\put(28,0){\line(1,0){19}}
\put(53,-1){\line(1,0){19}}
\put(53,1){\line(1,0){19}}
\put(59,-2.5){$>$}
\end{picture}
\end{center}

\vspace{1cm}

The first is the Dynkin diagram of a group of type $\A_1 \times \C_3$ corresponding to the fixed point group of type (I), $\Sp(1) \times \Aut(M,\iota)$, and the second to a group of type $\B_4$ corresponding to the fixed point group of type (II), $\Spin(Q,E_0)$.

\subsection{Galois cohomology}

The interpretation of conjugacy classes of $k$-involutions in terms of Galois cohomology follows much the same way as \cite{Hu12}.  If we first consider $H^0(Gal_k, \Aut(A,K))$ to be the cohomology group of $\Aut(A)$ defined over $K$ with coefficients in $Gal_k$ the absolute Galois group of $k$.  In this case $H^0(Gal_k, \Aut(A,K)) \cong \Aut(A,k)$ the automorphism group of $A$ defined over $k$. The group $H^1(Gal_k,\Aut(A,K))$  is the group of $K/k$-forms of $A$.  So when we consider the $k$-involutions $\Aut(A)$ we are looking at the subgroup of automorphisms that fix a certain subalgebra of $A$.  These subalgebras come in two types.  If $t \in \Aut(A)$ is an element of order $2$, then $t$ induces a $k$-involution $\I_t$, and $t$ fixes a subalgebra of the form $H_3(D,\gamma)$ or $E_0$, the zero space of multiplication by some idempotent element in $A$.  In either case, if we let $B \subset A$, the group $H^1(Gal_k,\Aut(A,B,K))$ corresponds to the $K/k$-forms of $B$.

\newtheorem{gal}[subsubsection]{Proposition}
\begin{gal}
\label{gal}
Let $A$ be a $k$-algebra, and $B$ a subalgebra of $A$ fixed by an element of order $2$ in $\Aut(A)$, then there is a bijection between $\mathcal{C}_k$, the isomorphism classes of involutions of $\Aut(A)$, and $H^1(Gal_k, \Aut(A,B,K))$ the $K/k$-forms of $B$.
\end{gal}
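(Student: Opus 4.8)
The plan is to combine the combinatorial reduction of Lemma~\ref{order2fixlem} with the classical Galois-descent description of twisted forms. Throughout, $B \subset A$ is a subalgebra fixed elementwise by some order-two element of $\Aut(A,k)$, the group $\Aut(A,B,K)$ is the group of $K$-automorphisms of $A$ stabilizing $B$, and $\mathcal{C}_k$ consists of the order-two elements of $\Aut(A,k)$ up to $\Aut(A,k)$-conjugacy. The argument rests on two features of the algebras in question: over the algebraically closed field $K$ any two order-two automorphisms of the same type are conjugate (Corollary~\ref{corinvjac2}), and isomorphisms between subalgebras extend to automorphisms of the ambient algebra (Jacobson's extension theorem and its corollaries).

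First I would build the map $\Psi\colon \mathcal{C}_k \to \{\,K/k\text{-forms of }B\,\}$. For a class represented by an order-two $t\in\Aut(A,k)$, put $B_t = \{\,a \in A \mid t(a)=a\,\}$. Since over $K$ all order-two automorphisms of the relevant type are conjugate, $(B_t)_K \cong B_K$, so $B_t$ is a $K/k$-form of $B$; set $\Psi([t]) = [B_t]$. Lemma~\ref{order2fixlem} says exactly that $t \cong t'$ if and only if $B_t \cong B_{t'}$ over $k$, so $\Psi$ is well defined and injective. For surjectivity I would use the extension theorem together with its corollaries: since $A$ is split, every $K/k$-form $B'$ of $B$ occurs as a $k$-subalgebra of $A$, and the order-two automorphism of $A$ whose fixed subalgebra is exactly $B'$ then lies in $\Aut(A,k)$ and maps under $\Psi$ to $[B']$. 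Thus $\Psi$ identifies $\mathcal{C}_k$ with the set of $K/k$-forms of $B$ (enumerated, in the split $\F_4$ case, by Lemma~\ref{mainlem} and Theorem~\ref{sigmaiso}).

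It remains to match the $K/k$-forms of $B$ with $H^1(Gal_k,\Aut(A,B,K))$; this is the standard twisting argument of \cite{Se97}. Fix $B \subset A$. Given a form $B' \subset A_K$ defined over $k$, pick a $K$-algebra isomorphism $\varphi\colon B_K \to B'_K$; by the extension theorem $\varphi$ is the restriction of some $\Phi \in \Aut(A_K)$. As $B'$ is defined over $k$, for every $\omega \in Gal_k$ both $\Phi$ and ${}^{\omega}\Phi$ carry $B_K$ onto $B'_K$, so $c_\omega := \Phi^{-1}\,{}^{\omega}\Phi$ stabilizes $B_K$ and hence lies in $\Aut(A,B,K)$; one checks that $(c_\omega)_\omega$ is a $1$-cocycle whose class is independent of the choice of $\Phi$. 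Conversely, twisting $B$ by a cocycle with values in $\Aut(A,B,K)$ returns a $k$-form of $B$ (realized, by the corollaries above, inside $A$), and the two constructions are mutually inverse by Galois descent. Composing with $\Psi$ gives the bijection asserted in the proposition.

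The step I expect to be the main obstacle is this last one. One must verify carefully that the cocycle $(c_\omega)$ really takes values in the stabilizer $\Aut(A,B,K)$ and not merely in $\Aut(A_K)$, and that twisting and untwisting are inverse to one another; both rely on the extension theorem, which is why the statement is natural precisely for algebras in which subalgebra isomorphisms extend. A secondary issue is cohomological bookkeeping: making it precise that the twisting construction reaches all of $H^1(Gal_k,\Aut(A,B,K))$ — here one uses that every $K/k$-form of $B$ embeds into the split $A$, by the corollaries to the extension theorem, so that no form is missed — and, in the type (II) case $B = E_0$, confirming consistency with Theorem~\ref{sigmaiso}, where there is a single class and hence $H^1(Gal_k,\Aut(A,B,K))$ must reduce to a point.
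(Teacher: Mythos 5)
Your proposal is correct and follows essentially the same route as the paper: the paper's entire proof is the single sentence ``This follows from \ref{order2fixlem}'', and your argument is exactly that reduction --- using Lemma~\ref{order2fixlem} to identify conjugacy classes of order-two elements with isomorphism classes of their fixed subalgebras --- fleshed out with the standard Galois-descent identification of $K/k$-forms with $H^1(Gal_k,\Aut(A,B,K))$ via the Jacobson extension theorem. You supply considerably more detail (well-definedness, injectivity, surjectivity, and the cocycle bookkeeping) than the paper does, but no new idea is introduced or needed.
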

\begin{proof}
This follows from \ref{order2fixlem}.
\end{proof}

When $B \cong H_3(D,\gamma)$ the cohomology group $H^1(Gal_k,\Aut(A,B,K))$ corresponds to the $K/k$-forms, which is in bijection with isomorphism classes of $k$-involutions of $\Aut(A)$ by \ref{gal}.  This also corresponds to the $K/k$-forms of fixed point groups, which we can think of as the isomorphism classes of the centralizer of the $k$-involution in $\Aut(A)$.  In other words the groups $H^1(Gal_k,\Aut(A,B,K)$ and $H^1(Gal_k, Z_G(\varphi))$ are in bijection when $\varphi$ is an element of order $2$ in $\Aut(G)$, where $G \cong \Aut(A)$.

\bibliographystyle{plain}

\end{document}